\newtheorem{theorem}{Theorem}[section]
\newtheorem{lemma}[theorem]{Lemma}
\newtheorem{corollary}[theorem]{Corollary}
\newtheorem{proposition}[theorem]{Proposition}
\theoremstyle{definition}
\newtheorem{definition}{Definition}
\theoremstyle{remark}
\newtheorem{remark}{Remark}[section]
\numberwithin{equation}{section}
\newcommand{\R}{\mathbb{R}}
\newcommand{\Om}{\Omega}
\newcommand{\Rn}{\R^n}
\newcommand{\il}{\Delta_{\infty}}
\newcommand{\la}{\langle}
\newcommand{\ra}{\rangle}
\newcommand{\loc}{\textnormal{loc}}
\DeclareMathOperator{\spt}{spt\,}
\DeclareMathOperator{\tr}{tr\,}
\DeclareMathOperator{\diverg}{div\,}
\begin{document}
%--------------------------------------------------------------------
\title[Second order estimates]{Global second order Sobolev-regularity of $p$-harmonic functions}
\author{Akseli Haarala}
\address{Matematiikan ja tilastotieteen laitos, Helsingin yliopisto, PL 68 (Pietari Kalmin katu 5) 00014 Helsingin yliopisto, Helsinki, Finland}
\email[Akseli Haarala]{akseli.haarala@helsinki.fi}
\author{Saara Sarsa}
\address{Matematiikan ja tilastotieteen laitos, Helsingin yliopisto, PL 68 (Pietari Kalmin katu 5) 00014 Helsingin yliopisto, Helsinki, Finland}
\email[Saara Sarsa]{saara.sarsa@helsinki.fi}
\thanks{The authors were both supported by the Academy of Finland, project 308759. Saara Sarsa is supported by the Academy of Finland, Center of Excellence in Randomness and Structures.}
\subjclass[2010]{35J92} 
\keywords{$p$-harmonic function, global Sobolev regularity} 
\begin{abstract}
We prove a global version of the classical result that $p$-harmonic functions belong to $W^{2,2}_{\loc}$ for $1<p<3+\frac{2}{n-2}$. The proof relies on Cordes' matrix inequalities \cite{Cordes1961} and techniques from the work of Cianchi and Maz'ya \cites{Cianchi2018,Cianchi2019}.
\end{abstract}
\maketitle
%--------------------------------------------------------------------
\section{Introduction}

Let $\Om\subset\Rn$, $n\geq 2$, be a bounded Lipschitz domain and $1<p<\infty$. 
For some $\varphi\in W^{1,p}(\Om)$ consider the problem of minimizing the $p$-energy functional 
$$ \int_\Om|Dv|^pdx $$
among all functions $v\in \varphi + W^{1,p}_0(\Om):=\{v\in W^{1,p}(\Om):v-\varphi\in W^{1,p}_0(\Om)\}$. Here $Dv=(v_{x_1},\ldots,v_{x_n})$ denotes the weak gradient of $v$ and $|Dv|=(v_{x_1}^2+\ldots+v_{x_n}^2)^{1/2}$ its Euclidean norm.
By the direct method of calculus of variations, there exists a unique minimizer $u\in \varphi+W^{1,p}_0(\Om)$. Moreover, the minimizer $u$ is $p$-harmonic in $\Om$. That is, $u$ solves the homogeneous $p$-Laplacian equation
\begin{equation} \label{eq:p-Laplace}
    \Delta_pu :=\diverg(|Du|^{p-2}Du)=0
\end{equation}
in the weak sense. More precisely, 
\begin{align*}
    \int_\Om |Du|^{p-2}\la Du,D\phi\ra dx=0
\end{align*}
for all $\phi\in C^\infty_0(\Om)$.
Thus the minimizer $u$ is a weak solution to the Dirichlet problem
\begin{equation} \label{eq:Dirichlet-problem-homogeneous-Intro}
\begin{cases}
\begin{aligned}
\Delta_pu=0 &\quad\text{in }\Om; \\
u=\varphi &\quad\text{on }\partial \Om.
\end{aligned}
\end{cases}
\end{equation}
For the definitions of the function spaces that are ubiquitous in this paper, see the end of this section.

The regularity theory of $p$-harmonic functions is well known.
Any $p$-harmonic function belongs to $C^{1,\alpha}_{\loc}$, where $\alpha=\alpha(n,p)\in (0,1]$, see \cites{Uraltseva1968,Uhlenbeck1977,Evans1982} for the case $p\geq2$ and \cites{Lewis1983,DiBenedetto1983,Tolksdorf1984,Iwaniec1989,Wang1994} for the full range $1<p<\infty$. 
The standard second order regularity result of $p$-harmonic functions states that $|Du|^{\frac{p-2}{2}}Du\in W^{1,2}_{\loc}$, see
\cite{Bojarski1987} for $p\geq2$ and \cite{DiBenedetto1989} for $1<p<\infty$.
Manfredi and Weitsman \cite{Manfredi1988}*{Lemma 5.1} showed that $u\in W^{2,2}_{\loc}(\Om)$ under the condition $1<p<3+\frac{2}{n-2}$. The restriction to the range of $p$ arises from a so-called Cordes condition.
We also mention a recent result by Dong, Peng, Zhang and Zhou \cite{Dong2020} that $|Du|^\beta Du\in W^{1,2}_{\loc}$ for $\beta>-1+\frac{(n-2)\max\{p-1,1\}}{2(n-1)}$. 
The range of $\beta$ was later improved to $\beta>-1+\frac{(n-2)(p-1)}{2(n-1)}$ by the second author \cite{Sarsa2022}.

The purpose of this paper is to investigate a global version of the classical result by Manfredi and Weitsman \cite{Manfredi1988}*{Lemma 5.1} that if $1<p<3+\frac{2}{n-2}$, then $u\in W^{2,2}_{\loc}(\Om)$. 
We prove that if $1<p<3+\frac{2}{n-2}$ and $\varphi\in W^{1,p}(\Om)\cap W^{2,2}(\Om)$ and the boundary $\partial\Om$ satisfies certain additional regularity assumptions, then $u\in W^{2,2}(\Om)$. 

For the regularity assumptions on the boundary $\partial\Om$, we refer to two recent papers by Cianchi and Maz'ya \cites{Cianchi2018,Cianchi2019}. See also \cite{Balci2021}. 

In \cite{Cianchi2018} the authors study the global second order Sobolev-regularity of the so-called generalized solutions to
\begin{equation} \label{eq:CM-PDE}
    -\diverg(a(|Du|)Du)=f\quad\text{in }\Om 
\end{equation}
with both Dirichlet boundary condition $u=0$ on $\partial\Om$ and Neumann boundary condition $\frac{\partial u}{\partial\nu}=0$ on $\partial\Om$.
Here $a\colon(0,\infty)\to(0,\infty)$ is a $C^1$-function such that
\begin{equation} \label{eq:CM-ellipticity-bound-Intro}
    -1<i_a:=\inf_{t>0}\,\frac{ta'(t)}{a(t)}\leq 
    \sup_{t>0}\,\frac{ta'(t)}{a(t)}=:s_a<\infty
\end{equation}
and $a'$ denotes the derivative of $a$.
The authors prove that $a(|Du|)Du\in W^{1,2}(\Om)$ if and only if $f\in L^2(\Om)$, provided that the boundary $\partial\Om$ of the Lipschitz domain $\Om$ is sufficiently regular. 
For the regularity of the boundary $\partial\Om$, firstly they assume that $\partial\Om\in W^{2,1}$, that is, the boundary $\partial\Om$ can be locally viewed as a graph of a twice weakly differentiable function. Secondly, they assume that the weak second fundamental form of $\partial\Om$ satisfies certain summability assumptions. 
In \cite{Cianchi2019} the authors prove a similar result in the case when the equation \eqref{eq:CM-PDE} is replaced by the $p$-Laplace system. They also relax the boundary regularity assumption compared to \cite{Cianchi2018}. 

Our regularity assumption on the boundary $\partial\Om$ is the same one that was used in \cite{Cianchi2019}. 
We leave the more detailed discussion concerning the boundary regularity results from \cites{Cianchi2018,Cianchi2019} to Section \ref{sec:BoundaryEstimates}. Here we merely introduce the notation that is needed to state our main theorem. Our notation is adopted from \cites{Cianchi2019}. 

We denote the diameter of $\Om$ by $d_\Om$ and the Lipschitz constant of the boundary $\partial\Om$ by $L_\Om$.
Suppose that $\partial\Om\in W^{2,1}$.
In particular, the second fundamental form $\mathcal{B}\colon\partial\Om\to\R^{(n-1)\times(n-1)}$ exists in the weak sense. Denote its norm by $|\mathcal{B}|$.

Let us define $\mathcal{K}_{\Om}\colon(0,1)\to[0,\infty]$ as
\begin{align} \label{eq:Kquantity-Intro}
    \mathcal{K}_{\Om}(r):=\sup_{x\in\partial\Om}
    \sup_{E\subset\partial\Om\cap B_r(x)}
    \frac{\int_E|\mathcal{B}|d\mathcal{H}^{n-1}}{\operatorname{cap}_{B_1(x)}(E)},
\end{align}
where
\begin{align*}
    \operatorname{cap}_{B_1(x)}(E)
    :=\inf\Big\{
    \int_{\Rn}|Dv|^2dx:v\in C^1_0(B_1(x))\text{ and } v\geq 1\text{ in }E
    \Big\}
\end{align*}
denotes the capacity of a set $E$ relative to the ball $B_1(x)$ and $\mathcal{H}^{n-1}$ stands for the $(n-1)$-dimensional Hausdorff measure.

We can now state our main theorem.

\begin{theorem} \label{thm:Global-estimate}
Let $\Om\subset\Rn$ be a bounded Lipschitz domain with diameter $d_\Om>0$ and Lipschitz constant $L_\Om>0$. 
Suppose in addition that $\partial\Om\in W^{2,1}$. 
Assume that $1<p<3+\frac{2}{n-2}$ and let $u\in W^{1,p}(\Om)$ solve \eqref{eq:Dirichlet-problem-homogeneous-Intro} with $\varphi\in W^{1,p}(\Om)\cap W^{2,2}(\Om)$. 
There exists a constant $\mathcal{K}_0=\mathcal{K}_0(n,p,d_\Om,L_\Om)>0$ such that if
\begin{equation} \label{eq:Smallness-of-the-limit-of-Kquantity}
    \lim_{r\to 0}\mathcal{K}_{\Om}(r)<\mathcal{K}_0,
\end{equation} 
then $Du\in W^{1,2}(\Om;\Rn)$ and
\begin{equation} \label{eq:Global-estimate}
\begin{aligned}
    \|Du\|_{W^{1,2}(\Om;\Rn)}
    \leq C\Big(\|D\varphi\|_{W^{1,2}(\Om;\Rn)}+\|D\varphi\|_{L^p(\Om;\Rn)}\Big)
\end{aligned}
\end{equation}
for some $C=C(n,p,\Om)$.
\end{theorem}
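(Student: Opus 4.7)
The plan is to derive Theorem~\ref{thm:Global-estimate} as an $\epsilon$-uniform a priori bound on smooth solutions of a non-degenerate approximating problem, and then pass to the limit. For $\epsilon\in(0,1]$ I would let $a_\epsilon(t)=(t^2+\epsilon)^{(p-2)/2}$ and take $u^\epsilon\in\varphi+W^{1,p}_0(\Om)$ to be the unique minimizer of $\frac{1}{p}\int_\Om(|Dv|^2+\epsilon)^{p/2}\,dx$, so that $u^\epsilon$ weakly solves $-\diverg(a_\epsilon(|Du^\epsilon|)Du^\epsilon)=0$ in $\Om$ with $u^\epsilon=\varphi$ on $\partial\Om$. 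Standard elliptic theory gives $u^\epsilon\in C^{2,\alpha}_{\loc}(\Om)\cap C(\overline{\Om})$, and the usual energy comparison yields $u^\epsilon\to u$ in $W^{1,p}(\Om)$ as $\epsilon\to 0$.

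Inside $\Om$, the equation is equivalent to the pointwise identity $A^\epsilon(Du^\epsilon):D^2u^\epsilon=0$, where
\[
A^\epsilon(\xi)=I+(p-2)\frac{\xi\otimes\xi}{|\xi|^2+\epsilon}.
\]
The eigenvalues of $A^\epsilon(\xi)$ are $1$ (with multiplicity $n-1$) and $1+(p-2)|\xi|^2/(|\xi|^2+\epsilon)$, and a direct computation shows that the Cordes ratio $(\tr A^\epsilon)^2/\tr\bigl((A^\epsilon)^2\bigr)$ stays above $n-1+\delta_0$ for some $\delta_0=\delta_0(n,p)>0$, uniformly in $\xi$ and $\epsilon$, precisely because $1<p<3+\frac{2}{n-2}$ is the sharp Cordes threshold for this matrix family. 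Cordes' matrix inequality \cite{Cordes1961} then produces a universal $\eta=\eta(n,p)\in(0,1)$ such that $|B^\epsilon|\leq\sqrt{1-\eta}$ pointwise, where $B^\epsilon:=\gamma A^\epsilon-I$ and $\gamma:=\tr(A^\epsilon)/\tr((A^\epsilon)^2)$. Together with the equation $A^\epsilon:D^2u^\epsilon=0$, this upgrades to the pointwise bound
\[
(\Delta u^\epsilon)^2=(B^\epsilon:D^2u^\epsilon)^2\leq(1-\eta)\,|D^2u^\epsilon|^2.
\]

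Setting $w^\epsilon:=u^\epsilon-\varphi\in W^{1,p}_0(\Om)$, I would next invoke a global Reilly--Miranda--Talenti identity of the form
\[
\int_\Om|D^2w^\epsilon|^2\,dx=\int_\Om(\Delta w^\epsilon)^2\,dx+\int_{\partial\Om}H(\partial_\nu w^\epsilon)^2\,d\mathcal{H}^{n-1}+R[\varphi,w^\epsilon]
\]
on the Lipschitz domain $\Om$ with $\partial\Om\in W^{2,1}$, in the form developed by Cianchi--Maz'ya \cite{Cianchi2019}; here $R[\varphi,w^\epsilon]$ collects lower-order terms controllable by $\|D\varphi\|_{W^{1,2}}$ and $\|D\varphi\|_{L^p}$. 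Combined with $|\Delta w^\epsilon|\leq|\Delta u^\epsilon|+|\Delta\varphi|\leq\sqrt{1-\eta}\,|D^2u^\epsilon|+|\Delta\varphi|$ and the elementary bound $|D^2u^\epsilon|^2\leq 2|D^2w^\epsilon|^2+2|D^2\varphi|^2$, the pointwise Cordes inequality above yields
\[
\eta\int_\Om|D^2w^\epsilon|^2\,dx\leq C\int_\Om|D^2\varphi|^2\,dx+C\int_{\partial\Om}|\mathcal{B}|\,|Dw^\epsilon|^2\,d\mathcal{H}^{n-1}+R[\varphi,w^\epsilon].
\]
Since $Dw^\epsilon=(\partial_\nu w^\epsilon)\nu$ on $\partial\Om$, the boundary integral is then controlled by the capacitary inequality
\[
\int_{\partial\Om\cap B_r(x)}|\mathcal{B}|\,|Dw^\epsilon|^2\,d\mathcal{H}^{n-1}\leq C\,\mathcal{K}_{\Om}(r)\Bigl(\|D^2w^\epsilon\|_{L^2(\Om)}^2+\text{l.o.t.}\Bigr)
\]
from \cite{Cianchi2019}, localised via a partition of unity on $\partial\Om$. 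The smallness hypothesis \eqref{eq:Smallness-of-the-limit-of-Kquantity} then lets us absorb this term into the left-hand side, producing the $\epsilon$-uniform bound $\|D^2u^\epsilon\|_{L^2(\Om)}\leq C\bigl(\|D\varphi\|_{W^{1,2}(\Om;\Rn)}+\|D\varphi\|_{L^p(\Om;\Rn)}\bigr)$. Weak compactness in $W^{2,2}(\Om)$ combined with the $W^{1,p}$-convergence $u^\epsilon\to u$ and uniqueness of the $p$-harmonic solution then concludes the proof.

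The main obstacle, I expect, is producing the Reilly-type identity above on a merely Lipschitz domain with $W^{2,1}$-regular boundary in precisely the form whose boundary integrand is handled by the capacitary inequality of \cite{Cianchi2019}. This demands an approximation of $\Om$ by smoother domains, a careful tangential integration by parts, and a uniform tracking of all constants as $\epsilon\to 0$. The delicate point is to align the quadratic defect from the Cordes inequality with the boundary correction coming from $\mathcal{B}$, so that the smallness of $\mathcal{K}_\Om$ genuinely suffices for absorption rather than just giving a conditional bound with an uncontrolled constant.
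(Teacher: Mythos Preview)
Your plan is essentially the paper's own: the pointwise Cordes inequality, the Reilly--Miranda--Talenti identity applied to $u-\varphi$ (the paper packages these together as a single divergence-form differential inequality, Corollary~\ref{cor:Inequality-with-arbitrary-vectorfield} with $b\equiv1$ and $W=D\varphi$), the capacitary trace bound from \cite{Cianchi2019}, and absorption under the smallness of $\mathcal{K}_\Om$. The only point to flag is that the approximation needed to justify the boundary identity is threefold rather than one: the paper regularises the operator ($\epsilon$), the domain ($\Om_m$ smooth with controlled $\mathcal{K}_{\Om_m}$, via \cite{Cianchi2019}*{Lemma 5.2}), \emph{and} the boundary datum ($\varphi_k\in C^\infty$ via Sobolev extension and mollification), so that the approximate solutions are genuinely $C^\infty(\overline{\Om_m})$ and all integrations by parts are classical before passing to the three limits in order.
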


The proof of Theorem \ref{thm:Global-estimate} is based on Cordes' matrix inequalities \cite{Cordes1961} and the aforementioned techniques taken from the work of Cianchi and Maz'ya \cites{Cianchi2018,Cianchi2019}.

To illustrate the idea of the proof of Theorem \ref{thm:Global-estimate}, consider the well-known identity
\begin{equation} \label{eq:Basic-identity-for-Laplacian}
    |D^2u|^2=\diverg(D^2uDu-\Delta uDu)
    +(\Delta u)^2
\end{equation}
that holds for any function $u\in C^3(\Om)$. 
Here $D^2u=(u_{x_ix_j})_{i,j=1}^n$ denotes the Hessian matrix of $u$, $|D^2u|=(\sum_{i,j=1}^nu_{x_ix_j}^2)^{1/2}$ denotes its (Hilbert-Schmidt) norm, and $\Delta u=\sum_{i=1}^nu_{x_ix_i}$ denotes the Laplacian of $u$.
The identity \eqref{eq:Basic-identity-for-Laplacian} appears repeatedly in the literature, see for instance \cites{Talenti1965,Grisvard1985,Koch2019,Dong2020,Balci2021}. 

The identity \eqref{eq:Basic-identity-for-Laplacian} has been generalized by replacing the Laplacian $\Delta u$ by a more general elliptic operator.

Let $A\colon\Om\to\R^{n\times n}$ be a matrix-valued function, $A=(A_{ij})_{i,j=1}^n$. Denote the norm of $A$ by $|A|=(\sum_{i,j=1}^nA_{ij}^2)^{1/2}$ and the trace of $A$ by $\tr(A)=\sum_{i=1}^nA_{ii}$.
Talenti \cite{Talenti1965} showed that if $A$ satisfies the so-called Cordes condition 
\begin{equation} \label{eq:Cordes-condition}
    (n-1+\delta)|A|^2\leq(\tr A)^2 \quad\text{in }\Om
\end{equation}
for some $0<\delta<1$, then for any $u\in C^3(\Om)$
\begin{equation}\label{eq:Talenti}
    c|D^2u|^2\leq \diverg(D^2u Du-\Delta u Du)
    +\frac{C}{|A|^2}\la A,D^2u\ra^2
\end{equation}
where $c=c(n,\delta)>0$ and $C=C(n,\delta)>0$. 
In the display \eqref{eq:Talenti} the brackets $\la\cdot,\cdot\ra$ denote the inner product of two matrices, that is, $\la A,D^2u\ra=\sum_{i,j=1}^nA_{ij}u_{x_ix_j}$. If $A$ happens to be the identity matrix, one can see that indeed, \eqref{eq:Talenti} can be viewed as a generalization of \eqref{eq:Basic-identity-for-Laplacian}.

Cianchi and Maz'ya \cite{Cianchi2018}*{Lemma 3.1} proved another generalization of \eqref{eq:Basic-identity-for-Laplacian}.
If $a\colon[0,\infty)\to(0,\infty)$ is a (sufficiently well-behaving) $C^1$-function such that the first inequality in \eqref{eq:CM-ellipticity-bound-Intro} holds,
%\begin{equation} \label{eq:Lower-ellipticity-bound-Intro}
%    i_a:=\inf_{t\geq 0}\,\frac{ta'(t)}{a(t)}>-1
%\end{equation}
then for any $u\in C^3(\Om)$
\begin{equation}\label{eq:CM-pointwise-differential-inequality}
    c\big(a(|Du|)\big)^2|D^2u|^2
    \leq
    \diverg\big(a(|Du|)^2(D^2u Du-\Delta u Du)\big) 
    +
    \big(\diverg(a(|Du|)Du)\big)^2
\end{equation}
where $c=c(n,i_a)>0$. For the precise assumptions on the function $a$, we refer to the statement of Lemma 3.1 in \cite{Cianchi2018}. Similarly as \eqref{eq:Talenti}, the inequality \eqref{eq:CM-pointwise-differential-inequality} can be viewed as a generalization of \eqref{eq:Basic-identity-for-Laplacian}.

Our central observation is that Cordes' matrix inequalities \cite{Cordes1961} imply yet another generalization of \eqref{eq:Basic-identity-for-Laplacian}, that contains both \eqref{eq:Talenti} and \eqref{eq:CM-pointwise-differential-inequality} as a special case.

Namely, let $a,b\colon[0,\infty)\to(0,\infty)$ be $C^1$-functions and denote
$$ \vartheta_a(t):=\frac{ta'(t)}{a(t)} 
\quad\text{and}\quad
\vartheta_b(t):=\frac{tb'(t)}{b(t)}
\quad\text{for all }t\geq0. $$
For any $u\in C^3(\Om)$, consider the vector fields
$$ V_a:=a(|Du|)Du\quad\text{and}\quad V_b:=b(|Du|)Du. $$
We show that if $a$ and $b$ satisfy the growth conditions
\begin{align*}
    -1<\inf_{t\geq0}\vartheta_a(t)\leq \sup_{t\geq 0}\vartheta_a(t)<\infty
    \quad\text{and}\quad
    -1<\inf_{t\geq0}\vartheta_b(t)\leq \sup_{t\geq 0}\vartheta_b(t)<\infty,
\end{align*}
and if they are "sufficiently close to each other", that is, if
\begin{align} \label{eq:Intro-Main-condition}
    0<\inf_{t\geq 0}\,
    \frac{1+\vartheta_a(t)}{1+\vartheta_b(t)}
    \leq \sup_{t\geq0}\,
    \frac{1+\vartheta_a(t)}{1+\vartheta_b(t)}
    <\frac{2(n-1)}{n-2},
\end{align}
then
\begin{align} \label{eq:Intro-Inequality}
    c|DV_b|^2
    &\leq
    \diverg\big((DV_b-\tr(DV_b)I)V_b\big) 
    +C\Big(\frac{b(|Du|)}{a(|Du|)}\Big)^2\big(\diverg(V_a)\big)^2,
\end{align}
for some positive constants $c>0$ and $C>0$. 
Our proof of \eqref{eq:Intro-Inequality} is based on Cordes' matrix inequalities which we discuss in Section \ref{sec:Cordes}. The proof of inequality \eqref{eq:Intro-Inequality} is given in Section \ref{sec:Differential-identity}, see Lemma \ref{lem:Inequality}.

The inequality \eqref{eq:Intro-Inequality} is the key tool of this paper.
It is a generalization of the aforementioned inequalities \eqref{eq:Talenti} and \eqref{eq:CM-pointwise-differential-inequality}.
To recover Talenti's inequality \eqref{eq:Talenti}, put $b\equiv 1$ in \eqref{eq:Intro-Inequality}. 
On the other hand, to recover Cianchi and Mazy'a's inequality \eqref{eq:CM-pointwise-differential-inequality}, put $b=a$ in \eqref{eq:Intro-Inequality}. 

In fact, our proof of \eqref{eq:Intro-Inequality} with $b=a$ is simpler and shorter than that of \eqref{eq:CM-pointwise-differential-inequality} in \cite{Cianchi2018}. See Remark \ref{rem:Recovery-of-CM} for details. In Remark \ref{rem:Recovery-of-CM} we also explain why in the case $b=a$ we can choose $C=1$ in \eqref{eq:Intro-Inequality}.

To prove the estimate \eqref{eq:Global-estimate} in Theorem \ref{thm:Global-estimate}, we use a further generalization of \eqref{eq:Intro-Inequality}. 
Namely, if \eqref{eq:Intro-Main-condition} holds and $W\in C^2(\Om;\Rn)$ is an arbitrary vector field, then
\begin{equation} \label{eq:Intro-Inequality-2} 
\begin{aligned}
    c|DV_b|^2
    &\leq
    \diverg\big((D(V_b-W)-\tr(D(V_b-W))I)(V_b-W)\big) \\
    &\quad
    +C\Big(|DW|^2+\Big(\frac{b(|Du|)}{a(|Du|)}\Big)^2\big(\diverg(V_a)\big)^2\Big).
\end{aligned}    
\end{equation}
See Corollary \ref{cor:Inequality-with-arbitrary-vectorfield} for details.

Observe that the inequality \eqref{eq:Intro-Inequality-2} can be used to easily derive local $L^{2}$-estimates for $DV_b$ in terms of $L^2$-oscillation of $V_b$, see Remark \ref{rem:Previous-work}. 

Let us briefly explain the outline of the proof of Theorem \ref{thm:Global-estimate}.
Once the inequality \eqref{eq:Intro-Inequality-2} is established in Sections \ref{sec:Cordes} and \ref{sec:Differential-identity}, we follow the idea of the proof of Theorem 2.4 from \cite{Cianchi2018}.
We first derive the estimate \eqref{eq:Global-estimate} under additional regularity assumptions on the domain $\Om$, the boundary data $\varphi$ and the operator $\Delta_p$. For $\epsilon>0$ small, we set $a(t)=(t^2+\epsilon)^{\frac{p-2}{2}}$ and $b\equiv1$ in \eqref{eq:Intro-Inequality-2}. In this case the condition \eqref{eq:Intro-Main-condition} is satisfied precisely when
$$ 1<p<3+\frac{2}{n-2}. $$ 

The key idea is to set $W=D\varphi$ in \eqref{eq:Intro-Inequality-2}. 
Since $u=\varphi$ on $\partial\Om$, we have $D_Tu=D_T\varphi$ on $\partial\Om$.
Here $D_T$ refers to the tangential gradient on the boundary, see Section \ref{sec:Boundary-Identity} for details.
In other words, the vector field $Du-D\varphi$ is always normal to the boundary manifold $\partial\Om$.
The flow of such vector field across the boundary $\partial\Om$ can be estimated in terms of the second fundamental form of the boundary manifold.
Consequently, we may derive appropriate boundary estimates similarly as in \cites{Cianchi2018,Cianchi2019}. 

In Section \ref{sec:BoundaryEstimates} we state a weighted trace inequality from \cite{Cianchi2019} that explains how the quantity $\mathcal{K}_\Om$ comes into play. In Section \ref{sec:BoundaryEstimates} we also make some remarks about the meaning of the condition \eqref{eq:Smallness-of-the-limit-of-Kquantity}.

In Section \ref{sec:Regularization} we put the tools from the previous sections into use to derive a regularized version of the estimate \eqref{eq:Global-estimate}, see Proposition \ref{prop:Global-estimate}.
Once the regularized version of \eqref{eq:Global-estimate} is established, we carefully pass to the limit to conclude the proof of Theorem \ref{thm:Global-estimate}. See Section \ref{sec:Proof} for details.

Throughout this paper, $W^{1,p}(\Om)$ denotes the Sobolev space of weakly differentiable functions $v\colon \Om\to\R$ such that $v,v_{x_i}\in L^p(\Om)$ for all $i=1,\ldots,n$. $W^{2,2}(\Om)$ denotes the Sobolev space of twice weakly differentiable functions $v\colon \Om\to\R$ such that $v,v_{x_i},v_{x_i,x_j}\in L^2(\Om)$ for all $i,j=1,\ldots,n$. $C^\infty_0(\Om)$ denotes the space of smooth functions that are compactly supported on $\Om$. $W^{1,p}_0(\Om)$ denotes the closure of $C^\infty_0(\Om)$ with respect to the Sobolev norm
$$ \|v\|_{W^{1,p}(\Om)}
=\Big(\int_\Om|v|^pdx+\sum_{i=1}^n\int_\Om|v_{x_i}|^pdx\Big)^{1/p}. $$

%--------------------------------------------------------------------
\section{Cordes' matrix inequalities} \label{sec:Cordes}

All the matrix inequalities in this section are due to Cordes \cite{Cordes1961}. See also \cite{Talenti1965}*{Lemma 3}. We provide detailed proofs for the convenience of the reader.

Consider the space of real square matrices $\R^{n\times n}$.
The space $\R^{n\times n}$ equipped with the usual Hilbert-Schmidt inner product 
$$ \la M,N\ra :=\sum_{i,j=1}^nM_{ij}N_{ij},
\quad\text{for all }M=(M_{ij})_{i,j=1}^n\in\R^{n\times n}\text{ and }
N=(N_{ij})_{i,j=1}^n\in\R^{n\times n}, $$
is a Hilbert space.
Given $M=(M_{ij})_{i,j=1}^n\in \R^{n\times n}$, we denote its Hilbert-Schmidt norm by $|M|:=\la M,M\ra^{1/2}$,
transpose by $M^\intercal:=(M_{ji})_{i,j=1}^n$ 
and trace by $\tr(M):=\sum_{i=1}^nM_{ii}$. If $M$ is invertible, we denote its inverse by $M^{-1}$. The identity matrix is denoted by $I:=\operatorname{diag}(1,\ldots,1)$.

\begin{definition}
A symmetric matrix $A\in\R^{n\times n}$ satisfies Cordes condition with respect to a symmetric, positive definite matrix $B\in\R^{n\times n}$ if $A\neq 0$ and
\begin{equation} \label{eq:Cordes-condition-wrtB}
    (n-1+\delta)\la B^{-1}A,(B^{-1}A)^\intercal\ra
    \leq (\tr(B^{-1}A))^2
\end{equation}
for some $0<\delta<1$.
\end{definition}

\begin{remark}
Any symmetric, positive definite matrix satisfies Cordes condition with respect to itself.
\end{remark}

\begin{lemma} \label{lem:Basic-Cordes}
Suppose that $A\in\R^{n\times n}$ is a symmetric matrix that satisfies Cordes condition \eqref{eq:Cordes-condition-wrtB} with respect to the identity matrix $I\in \R^{n\times n}$
for some $0<\delta<1$.
Then there exist constants $c=c(n,\delta)>0$ and $C=C(n,\delta)>0$ such that
\begin{align} \label{eq:Basic-form-of-Cordes}
    c|M|^2\leq |M|^2-(\tr(M))^2+\frac{C}{|A|^2}\la A,M\ra^2
\end{align}
for any symmetric matrix $M\in \R^{n\times n}$.
\end{lemma}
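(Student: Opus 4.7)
The plan is to reduce the matrix inequality to the positivity of a scalar quadratic form by exploiting the orthogonal decomposition of the Hilbert-Schmidt space $\R^{n\times n}$ into the line $\R\cdot I$ and its trace-free complement. Concretely, I would write
$$M = \frac{\tr M}{n}I + M_0, \qquad A = \frac{\tr A}{n}I + A_0, \qquad \tr M_0 = \tr A_0 = 0,$$
so that $|M|^2 = \frac{(\tr M)^2}{n} + |M_0|^2$, $\la A,M\ra = \frac{(\tr A)(\tr M)}{n} + \la A_0,M_0\ra$, and the first two terms on the right-hand side of \eqref{eq:Basic-form-of-Cordes} combine as $|M|^2-(\tr M)^2 = |M_0|^2 - \frac{n-1}{n}(\tr M)^2$. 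Setting $\gamma^2 := (\tr A)^2/(n|A|^2)$, the Cordes hypothesis \eqref{eq:Cordes-condition-wrtB} (with $B=I$) translates into the quantitative fact that $A$ lies in a narrow cone around $\R\cdot I$: one has $\gamma^2 \geq (n-1+\delta)/n$, equivalently $|A_0|^2/|A|^2 = 1-\gamma^2 \leq (1-\delta)/n$.

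Normalizing $\tau := \tr M/\sqrt{n}$ and, when $A_0 \neq 0$, further decomposing $M_0 = \mu\, A_0/|A_0| + M_0'$ with $M_0' \perp A_0$, a short substitution shows that \eqref{eq:Basic-form-of-Cordes} is equivalent to nonnegativity of
$$Q(\tau,\mu,M_0') = \bigl(C\gamma^2 - (n-1) - c\bigr)\tau^2 + 2C\gamma\sqrt{1-\gamma^2}\,\tau\mu + \bigl(C(1-\gamma^2) + 1 - c\bigr)\mu^2 + (1-c)|M_0'|^2,$$
for every admissible $\gamma^2 \in [(n-1+\delta)/n,\,1]$. The $|M_0'|^2$-term is handled trivially once $c \leq 1$, so everything reduces to positivity of the $2\times 2$ principal minor in $(\tau,\mu)$. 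A direct expansion identifies its determinant as $C\bigl(n\gamma^2 - (n-1+c)\bigr) - (n-1+c)(1-c)$. Invoking $n\gamma^2 \geq n-1+\delta$ from Cordes, this is nonnegative provided $c < \delta$ and $C \geq (n-1+c)(1-c)/(\delta-c)$; together with the additional requirement $C \geq n(n-1+c)/(n-1+\delta)$ ensuring the $\tau\tau$-entry is nonneg, the explicit choice $c = \delta/2$ and $C = C(n,\delta)$ large enough completes the verification.

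The only genuinely delicate point is controlling the cross term $2C\gamma\sqrt{1-\gamma^2}\,\tau\mu$, which without the Cordes hypothesis could be forced (by choosing $\tau,\mu$ with opposite signs) to overwhelm the diagonal, preventing any positive $c$. The bound $\gamma^2 > (n-1)/n$ built into the Cordes condition is precisely what keeps $\sqrt{1-\gamma^2}$ small enough for a single $(n,\delta)$-dependent choice of $C$ to render the $2\times 2$ minor positive semidefinite uniformly in $\gamma$; the rest of the argument is purely algebraic bookkeeping.
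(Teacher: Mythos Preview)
Your proposal is correct and follows essentially the same route as the paper: both proofs orthogonally decompose $A$ and $M$ along $\operatorname{span}\{I\}$, $\operatorname{span}\{A_0\}$, and their orthogonal complement, reduce the claim to positivity of a $2\times2$ quadratic form in the first two coordinates, and conclude by checking that the Cordes bound forces the determinant to be positive once $C$ is chosen large enough. Your variables $(\tau,\mu,M_0')$ are exactly the paper's $(m_\parallel,m_\perp,\tilde M)$, and your determinant formula $C\bigl(n\gamma^2-(n-1+c)\bigr)-(n-1+c)(1-c)$ specializes at $c=0$ to the paper's $1-n+C\delta$; the only cosmetic difference is that you absorb the target constant $c$ into the quadratic form from the outset, whereas the paper first makes the form positive definite and then extracts $c$ afterward.
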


\begin{proof}
Write
$$ A=A_{\parallel}+A_{\perp} $$
where 
$$ A_{\parallel}:=\frac{\tr(A)}{n}I $$
is parallel to the identity matrix and 
$$ A_{\perp}:=A-A_{\parallel} $$ 
is perpendicular to the identity matrix.
Then the Cordes condition \eqref{eq:Cordes-condition-wrtB} with respect to the identity matrix can be written as
\begin{align} \label{eq:Another-form-of-Cordes}
    \delta|A|^2\leq |A_\parallel|^2-(n-1)|A_\perp|^2
\end{align}
for some $0<\delta<1$.
Notice that it suffices to study the case $A_\parallel\neq 0$ and $A_\perp\neq 0$. Namely, if $A_\parallel=0$, then \eqref{eq:Another-form-of-Cordes} cannot hold. On the other hand, if $A_\perp=0$, then the claim is trivially true for $c=1$ and $C=n$.

Let $M\in\R^{n\times n}$ be any symmetric matrix. We write 
\begin{align*}
    M=m_\parallel\frac{A_\parallel}{|A_\parallel|}+m_\perp\frac{A_\perp}{|A_\perp|}+\tilde{M}
\end{align*}
where
$$ m_\parallel:=\la \frac{A_\parallel}{|A_\parallel|},M\ra
\quad\text{and}\quad
m_\perp:=\la \frac{A_\perp}{|A_\perp|},M\ra $$
are the coordinates of $M$ with respect to $\operatorname{span}\{A_\parallel\}$ and $\operatorname{span}\{A_\perp\}$ respectively, and
$\tilde{M}$ denotes the part of $M$ that is perpendicular to both $I$ and $A$. Then
\begin{align*}
    |M|^2=m_\parallel^2+m_\perp^2+|\tilde{M}|^2 
\end{align*}
and
\begin{align*}
    (\tr(M))^2=nm_\parallel^2
\end{align*}
and
\begin{align*}
    \la A,M\ra^2
    &=\big(\la A_\parallel,M\ra+\la A_\perp,M\ra\big)^2 \\
    &=\big(|A_\parallel|m_\parallel+|A_\perp|m_\perp\big)^2 \\
    &=|A_\parallel|^2m_\parallel^2+2|A_\parallel||A_\perp|m_\parallel m_\perp+|A_\perp|^2m_\perp^2.
\end{align*}
Now the right hand side of \eqref{eq:Basic-form-of-Cordes} can be written as
\begin{align*}
    &|M|^2-(\tr(M))^2+\frac{C}{|A|^2}\la A,M\ra^2 \\
    &=
    \big(m_\parallel^2+m_\perp^2+|\tilde{M}|^2\big)
    -nm_\parallel^2
    +\frac{C}{|A|^2}\big(|A_\parallel|^2m_\parallel^2+2|A_\parallel||A_\perp|m_\parallel m_\perp+|A_\perp|^2m_\perp^2\big) \\
    &=Q+|\tilde{M}|^2
\end{align*}
where
\begin{align*}
    Q:=\Big(1-n+\frac{C|A_\parallel|^2}{|A|^2}\Big)m_\parallel^2
    +\Big(1+\frac{C|A_\perp|^2}{|A|^2}\Big)m_\perp^2 
    +\frac{2C|A_\parallel||A_\perp|}{|A|^2}m_\parallel m_\perp 
\end{align*}
is a quadratic form in $m _\parallel$ and $m_\perp$.
By the Cordes condition \eqref{eq:Another-form-of-Cordes}, the determinant of $Q$ has a lower bound 
\begin{align*}
    \operatorname{det}_Q
    &=
    1-n+\frac{C}{|A|^2}\big(|A_\parallel|^2-(n-1)|A_\perp|^2\big) \\
    &\geq
    1-n+C\delta. 
\end{align*}
Consequently, we can select $C=C(n,\delta)>0$ such that $\operatorname{det}_Q>0$. 
We can then fix $c=c(n,\delta)>0$ such that
$$ Q\geq c(m_{\parallel}^2+m_\perp^2) $$
and thus \eqref{eq:Basic-form-of-Cordes} holds.
\end{proof}

\begin{corollary} \label{cor:General-Cordes}
Let $A\in\R^{n\times n}$ be a symmetric matrix that satisfies the Cordes condition  \eqref{eq:Cordes-condition-wrtB} with respect to a symmetric, positive definite matrix $B\in\R^{n\times n}$
for some $0<\delta<1$.
Then there exist constants $c=c(n,\delta)>0$ and $C=C(n,\delta)>0$ such that
\begin{equation*}
    c\la BM,(BM)^\intercal\ra
    \leq 
    \la BM,(BM)^\intercal\ra-(\tr(BM))^2
    +\frac{C}{\la B^{-1}A,(B^{-1}A)^\intercal\ra}\langle A,M\rangle^2
\end{equation*}
for any symmetric matrix $M\in \R^{n\times n}$.
\end{corollary}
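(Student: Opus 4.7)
\textbf{Proof proposal for Corollary \ref{cor:General-Cordes}.} The plan is to reduce the general statement to Lemma \ref{lem:Basic-Cordes} by a symmetric change of variables using the positive definite square root of $B$. Since $B$ is symmetric and positive definite, let $B^{1/2}$ denote its (symmetric, positive definite) square root, and set
$$\tilde{A} := B^{-1/2} A B^{-1/2}, \qquad \tilde{M} := B^{1/2} M B^{1/2}.$$
Both matrices are symmetric, and $\tilde{A}\neq 0$ provided $A\neq 0$, so Lemma \ref{lem:Basic-Cordes} will apply to $\tilde{A}$ and $\tilde{M}$ once I check the appropriate Cordes condition.

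The heart of the proof is a short bookkeeping computation showing that the four quantities
$$\la BM, (BM)^\intercal\ra, \quad (\tr(BM))^2, \quad \la A, M\ra, \quad \la B^{-1}A, (B^{-1}A)^\intercal\ra$$
that appear in the statement can be rewritten in terms of $\tilde{A}$ and $\tilde{M}$ using only the cyclic property of the trace and the identity $\la X, Y^\intercal\ra = \tr(XY)$. Explicitly, I would verify that
$$\la BM, (BM)^\intercal\ra = \tr((BM)^2) = \tr(\tilde{M}^2) = |\tilde{M}|^2,$$
$$\tr(BM) = \tr(B^{1/2} M B^{1/2}) = \tr(\tilde{M}),$$
and analogously
$$\la B^{-1}A, (B^{-1}A)^\intercal\ra = |\tilde{A}|^2, \qquad \tr(B^{-1}A) = \tr(\tilde{A}).$$
Finally, using symmetry of $A$,
$$\la A, M\ra = \tr(AM) = \tr(B^{-1/2} A B^{-1/2} \cdot B^{1/2} M B^{1/2}) = \tr(\tilde{A}\tilde{M}) = \la \tilde{A}, \tilde{M}\ra.$$

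With these identities in hand, the Cordes condition \eqref{eq:Cordes-condition-wrtB} on $A$ relative to $B$ becomes exactly $(n-1+\delta)|\tilde{A}|^2\leq (\tr\tilde{A})^2$, i.e.\ the hypothesis of Lemma \ref{lem:Basic-Cordes} for $\tilde{A}$ with respect to $I$. Applying that lemma to $\tilde{A}$ and $\tilde{M}$ yields
$$c|\tilde{M}|^2 \leq |\tilde{M}|^2 - (\tr\tilde{M})^2 + \frac{C}{|\tilde{A}|^2}\la \tilde{A}, \tilde{M}\ra^2,$$
and translating back through the identities above gives the desired inequality with the same constants $c, C$ depending only on $n$ and $\delta$.

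No step presents a real obstacle; the only thing to be careful about is the cyclic manipulation of traces and the fact that $B^{1/2}$ commutes with $B$ and $B^{-1}$, which makes all the reductions above clean. The main conceptual point worth emphasizing is the choice of the \emph{symmetric} congruence $X\mapsto B^{1/2}XB^{1/2}$ (rather than $X\mapsto BX$), which is what preserves symmetry of $\tilde{A}$ and $\tilde{M}$ and thereby allows Lemma \ref{lem:Basic-Cordes} to be invoked directly.
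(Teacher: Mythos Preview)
Your proof is correct and is essentially the same as the paper's: the paper picks an invertible $\Phi$ with $\Phi^\intercal B\Phi=I$ and applies Lemma~\ref{lem:Basic-Cordes} to $\Phi^\intercal A\Phi$ and $\Phi^{-1}M(\Phi^{-1})^\intercal$, and your choice $\Phi=B^{-1/2}$ is precisely one such $\Phi$, leading to the same substitutions $\tilde A=B^{-1/2}AB^{-1/2}$, $\tilde M=B^{1/2}MB^{1/2}$. The trace identities you spell out are exactly what is implicit in the paper's one-line reduction.
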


\begin{proof}
Since $B$ is symmetric and positive definite, there exists an invertible matrix $\Phi\in\R^{n\times n}$ such that
$\Phi^\intercal B\Phi=I$.
Consequently $\Phi^\intercal A\Phi$ is a symmetric matrix that satisfies Cordes condition \eqref{eq:Cordes-condition-wrtB} with respect to the identity matrix. 
We apply Lemma \ref{lem:Basic-Cordes} with $\Phi^\intercal A\Phi$ in place of $A$ and $\Phi^{-1}M(\Phi^{-1})^\intercal$ in place of $M$ to obtain the desired result. 
\end{proof}

\begin{lemma} \label{lem:Lower-bound-for-product-of-matrix-and-transpose}
Let $B\in\R^{n\times n}$ be a positive definite symmetric matrix with the smallest eigenvalue $\lambda_{\min}$ and the largest eigenvalue $\lambda_{\max}$. Then
$$ |BM|^2\leq \Big(\frac{\lambda_{\max}}{\lambda_{\min}}\Big)^2\la BM,(BM)^\intercal\ra $$
for any symmetric matrix $M\in\R^{n\times n}$.
\end{lemma}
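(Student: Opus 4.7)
The plan is to reduce to a spectral inequality by diagonalizing $B$. Since $B$ is symmetric and positive definite, write $B=Q\Lambda Q^\intercal$ with $Q$ orthogonal and $\Lambda=\operatorname{diag}(\lambda_1,\ldots,\lambda_n)$, where $\lambda_i\in[\lambda_{\min},\lambda_{\max}]$. Setting $N:=Q^\intercal M Q$, which remains symmetric, one computes $BM = Q\Lambda N Q^\intercal$, so by orthogonal invariance of the Hilbert--Schmidt inner product and cyclicity of the trace,
\[
|BM|^2 = \tr(\Lambda^2 N^2) = \sum_{i,j}\lambda_i^2\, N_{ij}^2,\qquad
\la BM,(BM)^\intercal\ra = \tr(\Lambda N\Lambda N) = \sum_{i,j}\lambda_i\lambda_j\, N_{ij}^2,
\]
where the symmetry of $N$ is used via $(N^2)_{ii}=\sum_j N_{ij}^2$ and $(\Lambda N\Lambda N)_{ii} = \lambda_i\sum_j \lambda_j N_{ij}^2$.

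With both sides rewritten in this diagonal fashion, and using $N_{ij}^2 = N_{ji}^2$ to symmetrize the first sum as $\frac12\sum_{i,j}(\lambda_i^2+\lambda_j^2)N_{ij}^2$, the claim reduces to verifying the pointwise inequality $\lambda_i^2+\lambda_j^2\leq 2K\lambda_i\lambda_j$ for every $i,j$, with $K=(\lambda_{\max}/\lambda_{\min})^2$. Dividing by $\lambda_i\lambda_j$, this becomes $r+r^{-1}\leq 2K$ for $r := \lambda_i/\lambda_j$. Since $r\mapsto r+r^{-1}$ is convex, its maximum over $[\lambda_{\min}/\lambda_{\max},\lambda_{\max}/\lambda_{\min}]$ is attained at the endpoints and equals $t+t^{-1}$ with $t:=\lambda_{\max}/\lambda_{\min}\geq 1$. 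The remaining one-variable check $t+t^{-1}\leq 2t^2$ is equivalent to $2t^3\geq t^2+1$, which is trivial for $t\geq 1$ since $2t^3\geq 2t^2\geq t^2+1$.

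There is no real obstacle beyond this bookkeeping; the symmetry of $M$ (equivalently, of $N$) is what allows one to collapse the off-diagonal terms and is genuinely essential, since for a non-symmetric $M$ the quantity $\la BM,(BM)^\intercal\ra$ can vanish while $|BM|^2$ does not, ruling out any bound depending only on the spectrum of $B$.
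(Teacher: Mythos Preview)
Your proof is correct and follows essentially the same approach as the paper: diagonalize $B$, write $|BM|^2=\sum_{i,j}\lambda_i^2 N_{ij}^2$ and $\la BM,(BM)^\intercal\ra=\sum_{i,j}\lambda_i\lambda_j N_{ij}^2$, and compare. The only difference is that the paper finishes more quickly by bounding each sum separately against $|M|^2=\sum_{i,j}N_{ij}^2$ via $\lambda_i^2\leq\lambda_{\max}^2$ and $\lambda_i\lambda_j\geq\lambda_{\min}^2$, avoiding your termwise symmetrization and the one-variable check.
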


\begin{proof}
Without loss of generality we may assume that $B$ is a diagonal matrix, that is, $B=\operatorname{diag}(\lambda_1,\ldots,\lambda_n)$ for some $\lambda_1\ldots,\lambda_n\in(0,\infty)$.
Then
\begin{align} \label{eq:Upper-bound}
    |BM|^2
    =\sum_{i,j}(\lambda_iM_{ij})^2
    \leq \lambda_{\max}^2|M|^2
\end{align}
and
\begin{align} \label{eq:Lower-bound}
    \la BM,(BM)^\intercal\ra
    =\sum_{i,j=1}^n\lambda_i M_{ij}\lambda_jM_{ji} 
    =\sum_{i,j=1}^n\lambda_i\lambda_jM_{ij}^2 
    \geq \lambda_{\min}^2|M|^2.
\end{align}
The estimates \eqref{eq:Upper-bound} and \eqref{eq:Lower-bound} together imply the desired result.
\end{proof}

%--------------------------------------------------------------------
\section{Differential inequality} \label{sec:Differential-identity}

In this section we prove our key inequalities \eqref{eq:Intro-Inequality} and \eqref{eq:Intro-Inequality-2}.
Let $\Om\subset\Rn$ be a domain and $u\in C^3(\Om)$. Consider the two vector fields
$$ V_a:=a(|Du|)Du\quad\text{and}\quad V_b:=b(|Du|)Du, $$
where $a\colon[0,\infty)\to(0,\infty)$ and $b\colon[0,\infty)\to(0,\infty)$ are $C^1$-functions.
We introduce some notation, similar to \cite{Cianchi2018}. Let
$$ \vartheta_a(t):=\frac{ta'(t)}{a(t)} 
\quad\text{and}\quad
\vartheta_b(t):=\frac{tb'(t)}{b(t)}
\quad\text{for all }t\geq0, $$
and
$$ i_a:=\inf_{t\geq0} \vartheta_a(t)\quad\text{and}\quad
s_a:=\sup_{t\geq 0}\vartheta_a(t), $$
and
$$ i_b:=\inf_{t\geq0} \vartheta_b(t)\quad\text{and}\quad
s_b:=\sup_{t\geq 0}\vartheta_b(t). $$
Our standing assumption on $a$ and $b$ is that
\begin{equation} \label{eq:Ellipticity-bounds}
    -1<i_a\leq s_a<\infty
    \quad\text{and}\quad
    -1<i_b\leq s_b<\infty.    
\end{equation}
Observe that
\begin{align*}
    DV_a=\big((a(|Du|)u_{x_i})_{x_j}\big)_{i,j=1}^n
    =a(|Du|)AD^2u
\end{align*}
and
\begin{align*}
    DV_b=\big((b(|Du|)u_{x_i})_{x_j}\big)_{i,j=1}^n
    =b(|Du|)BD^2u
\end{align*}
where
\begin{equation} \label{eq:AB}
    A:=I+\vartheta_a(|Du|)\frac{Du\otimes Du}{|Du|^2}
    \quad\text{and}\quad
    B:=I+\vartheta_b(|Du|)\frac{Du\otimes Du}{|Du|^2}
\end{equation}
are symmetric $n\times n$ matrices. In the above display $I$ stands for the identity matrix and $Du\otimes Du$ stands for the tensor product of the vector $Du$ with itself. 

We record certain elementary facts on the matrices $A$ and $B$. If $Du\neq 0$, the eigenvectors of both $A$ and $B$ are
$$ e_1,\ldots,e_{n-1},\frac{Du}{|Du|} $$
where $\{e_1,\ldots,e_{n-1}\}$ is an orthogonal basis of the orthogonal complement of $\operatorname{span}\{Du\}$. 
If $Du=0$, then $A$ and $B$ are well defined and equal to $I$, because
$\vartheta_a(0)=0=\vartheta_b(0)$.
We conclude that the eigenvalues of $A$ and $B$ are
$$ 1,\ldots,1,1+\vartheta_a(|Du|) 
\quad\text{and}\quad
1,\ldots,1,1+\vartheta_b(|Du|), $$
respectively.
The assumption \eqref{eq:Ellipticity-bounds} guarantees that $A$ and $B$ are uniformly elliptic in $\Om$.

The following Lemma is elementary yet useful to state for record. 
We noticed that it appears implicitly in \cite{Grisvard1985}*{Theorem 3.1.1.1} and \cite{Peng}, and undoubtedly in many other instances in the literature.

\begin{lemma} \label{lem:Divergence-structure-observation}
If $X\in C^2(\Om;\Rn)$, then
\begin{align*}
    \diverg((DX-\tr(DX)I)X)
    =\la DX,DX^\intercal\ra -(\tr(DX))^2
\end{align*}
everywhere in $\Om$.
\end{lemma}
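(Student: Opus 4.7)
The plan is to verify this identity by a direct componentwise computation, carefully separating the terms that involve second derivatives of $X$ from those that do not and showing that the second-order contributions cancel by Schwarz's theorem.

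More concretely, let $Y := (DX - \tr(DX)I)X$, so that the $i$-th component is
\begin{equation*}
    Y_i = \sum_{j=1}^n X_{i,x_j} X_j - X_i \tr(DX).
\end{equation*}
First I would apply the product rule to each term of $\diverg Y = \sum_i \partial_{x_i} Y_i$. The derivative of the first sum produces $\sum_{i,j} X_{i,x_j x_i} X_j$ together with $\sum_{i,j} X_{i,x_j} X_{j,x_i}$; the latter is exactly $\la DX,DX^\intercal\ra$ by definition of the Hilbert–Schmidt inner product and the transpose. The derivative of the second term gives $(\tr(DX))^2$ together with $\sum_i X_i \,\partial_{x_i}(\tr(DX))$.

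The key observation is then that, by the $C^2$ regularity of $X$ and the symmetry of mixed partials,
\begin{equation*}
    \sum_{i,j} X_{i,x_j x_i} X_j
    = \sum_j X_j\, \partial_{x_j}\!\Big(\sum_i X_{i,x_i}\Big)
    = \sum_j X_j\, \partial_{x_j}(\tr(DX)),
\end{equation*}
which cancels the remaining second-derivative contribution coming from $X_i\,\partial_{x_i}(\tr(DX))$. After this cancellation only $\la DX,DX^\intercal\ra - (\tr(DX))^2$ remains, yielding the claim.

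There is no real obstacle here: the whole argument is essentially bookkeeping, and the only nontrivial point is recognizing that swapping the order of differentiation on $X_{i,x_j x_i}$ converts the apparently second-order term into the gradient of the trace, which then annihilates the symmetric partner. The $C^2$ hypothesis is exactly what is needed for this interchange to be legitimate pointwise in $\Om$.
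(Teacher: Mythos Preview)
Your proof is correct and follows essentially the same approach as the paper: a direct componentwise computation in which the second-derivative terms cancel by the symmetry of mixed partials. The paper packages the computation slightly differently, first writing the product rule $\diverg(MV)=\la\diverg(M^\intercal),V\ra+\la M^\intercal,DV\ra$ and then checking that $\diverg(DX^\intercal-\tr(DX)I)=0$, but this is exactly your Schwarz cancellation rewritten.
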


\begin{proof} 
The proof is a direct calculation. Indeed,
notice that if $M\in C^1(\Om;\R^{n\times n})$ and $V\in C^1(\Om;\Rn)$, then
\begin{align*}
    \diverg(MV)
    &=
    \sum_{i=1}^n\frac{\partial}{\partial x_i}\Big(\sum_{j=1}^nM_{ij}V_j\Big) \\
    &=
    \sum_{i,j=1}^n\Big(\frac{\partial M_{ij}}{\partial x_i}V_j+M_{ij}\frac{\partial V_j}{\partial x_i}\Big) \\
    &=
    \la\diverg(M^\intercal),V\ra+\la M^\intercal,DV\ra.
\end{align*}
In the above display we use the convention that the divergence of the matrix $M^\intercal$ is a vector whose components are divergences of the rows of $M^\intercal$.

In our case $M=DX-\tr(DX)I$ and $V=X$. Then
\begin{align*}
    \diverg((DX-\tr(DX)I)X)
    &=
    \la \diverg(DX^\intercal-\tr(DX)I),X\ra 
    +\la DX^\intercal,DX\ra -(\tr(DX))^2 \\
    &=
    \la DX^\intercal,DX\ra -(\tr(DX))^2
\end{align*}
because for each $i=1,\ldots,n$
\begin{align*}
    \big(\diverg(DX^\intercal-\tr(DX)I)\big)_i
    &=
    \sum_{j=1}^n\frac{\partial(DX)_{ji}}{\partial x_j} -\sum_{j=1}^n\frac{\partial (\tr(DX)\delta_{ij})}{\partial x_j} \\
    &=
    \sum_{j=1}^n\frac{\partial}{\partial x_j}\Big(\frac{\partial X_j}{\partial x_i}\Big) -\frac{\partial}{\partial x_i}\Big(\sum_{k=1}^n\frac{\partial X_k}{\partial x_k}\Big) \\
    &=0.
\end{align*}
The proof is finished.
\end{proof}

\begin{remark}
It is easy to verify that Lemma \ref{lem:Divergence-structure-observation} holds even more generally. Namely, if $X\in C^2(\Om;\Rn)$ and $Y\in C^1(\Om;\Rn)$ then 
\begin{align*}
    \diverg((DX-\tr(DX)I)Y)
    =\la DX,DY^\intercal\ra -\tr(DX)\tr(DY)
\end{align*}
everywhere in $\Om$.
\end{remark}

\begin{remark} \label{rem:Recovery-of-CM}
If we set $X=V_a$ in Lemma \ref{lem:Divergence-structure-observation}, we recover the inequality \eqref{eq:CM-pointwise-differential-inequality} of Cianchi and Maz'ya. Indeed,
\begin{equation} \label{eq:Computation-for-CM}
\begin{aligned}
    &\diverg\big((DV_a-\tr(DV_a)I)V_a\big) \\
    &=
    \diverg\Big(\big(a(|Du|)\big)^2
    \Big(I+\vartheta_a(|Du|)\frac{Du\otimes Du}{|Du|^2}\Big)D^2uDu\Big) \\
    &\quad
    -\diverg\Big(
    \big(a(|Du|)\big)^2
    \tr\Big(\Big(I+\vartheta_a(|Du|)\frac{Du\otimes Du}{|Du|^2}\Big)D^2u\Big)Du\Big) \\
    &=
    \diverg\big(
    \big(a(|Du|)\big)^2(D^2uDu-\Delta uDu)\big) \\
    &\quad
    +
    \diverg\Big(
    \big(a(|Du|)\big)^2\frac{\vartheta_a(|Du|)}{|Du|^2}
    \big(
    (Du\otimes Du)D^2uDu-\tr((Du\otimes Du)D^2u)Du
    \big)
    \Big) \\
    &=
    \diverg\big(
    \big(a(|Du|)\big)^2(D^2uDu-\Delta uDu)\big).
\end{aligned}     
\end{equation}
In the last equality of the above display we used the fact that
$$ (Du\otimes Du)D^2uDu=\il uDu=\tr((Du\otimes Du)D^2u)Du $$
for any sufficiently smooth function $u$.
Moreover, the estimate \eqref{eq:Lower-bound} in the proof of Lemma \ref{lem:Lower-bound-for-product-of-matrix-and-transpose} implies that
\begin{equation} \label{eq:Estimate-from-below-CM}
    \la DV_a,DV_a^\intercal\ra = \big(a(|Du|)\big)^2\la AD^2u,(AD^2u)^\intercal\ra
    \geq \big(\min\{1,1+i_a\}\big)^2\big(a(|Du|)\big)^2|D^2u|^2.
\end{equation}
Finally notice that
\begin{equation} \label{eq:Final-observation-CM}
   \tr(DV_a)=\diverg(V_a)=\diverg\big(a(|Du|)Du\big). 
\end{equation}
As we combine \eqref{eq:Computation-for-CM}, \eqref{eq:Estimate-from-below-CM} and \eqref{eq:Final-observation-CM} with Lemma \ref{lem:Divergence-structure-observation}, we get an elementary proof of \eqref{eq:CM-pointwise-differential-inequality}. Moreover, notice that in the special case when $a\equiv1$ the inequality \eqref{eq:Estimate-from-below-CM} improves to an equality and we recover the basic identity \eqref{eq:Basic-identity-for-Laplacian}.
\end{remark}

For the statement of the next lemma we set
$$ \theta(t):=\frac{1+\vartheta_a(t)}{1+\vartheta_b(t)}. $$
Note that $\theta$ depends on both $a$ and $b$. In addition we denote
$$ i_\theta:=\inf_{t\geq0} \theta(t)\quad\text{and}\quad
s_\theta:=\sup_{t\geq 0}\theta(t). $$

\begin{lemma} \label{lem:Inequality}
Suppose that $a,b\colon[0,\infty)\to(0,\infty)$ are functions as described above and
\begin{equation} \label{eq:Cordes-for-AwrtB}
    0<i_\theta\leq s_\theta<\frac{2(n-1)}{n-2}.
\end{equation}
Then there exist constants $c=c(n,i_\theta,s_\theta,i_b,s_b)>0$ and $C=C(n,i_\theta,s_\theta)>0$ such that
\begin{equation}
    c|DV_b|^2\leq
    \diverg((DV_b-\tr(DV_b)I)V_b)
    +C\Big(\frac{b(|Du|)}{a(|Du|)}\Big)^2\big(\diverg(V_a)\big)^2.
\end{equation}
\end{lemma}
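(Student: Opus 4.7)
The plan is to reduce the inequality to a pointwise matrix statement and then apply Cordes' inequality (Corollary~\ref{cor:General-Cordes}) with $M=D^2u$, using that the matrices $A$ and $B$ defined in \eqref{eq:AB} share a common eigenbasis.

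First, I would use the key structural identities
\begin{align*}
    DV_a=a(|Du|)AD^2u,\quad DV_b=b(|Du|)BD^2u, \quad \tr(DV_a)=a(|Du|)\la A,D^2u\ra,
\end{align*}
together with Lemma~\ref{lem:Divergence-structure-observation} applied to $X=V_b$, to rewrite the sought inequality as
\begin{equation*}
    c|DV_b|^2\leq b(|Du|)^2\bigl[\la BD^2u,(BD^2u)^\intercal\ra-(\tr(BD^2u))^2\bigr]+C\Bigl(\frac{b(|Du|)}{a(|Du|)}\Bigr)^2 a(|Du|)^2\la A,D^2u\ra^2.
\end{equation*}
After dividing by $b(|Du|)^2$, this is exactly the content of Corollary~\ref{cor:General-Cordes} applied to $M=D^2u$, modulo (i) verifying the Cordes condition of $A$ with respect to $B$ with a $\delta$ depending only on $n,i_\theta,s_\theta$, (ii) bounding $\la B^{-1}A,(B^{-1}A)^\intercal\ra$ from below, and (iii) replacing $\la BD^2u,(BD^2u)^\intercal\ra$ on the left-hand side by $|DV_b|^2$.

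For (i), observe that $A$ and $B$ are simultaneously diagonalizable with eigenvalues $(1,\dots,1,1+\vartheta_a(|Du|))$ and $(1,\dots,1,1+\vartheta_b(|Du|))$ respectively, so $B^{-1}A$ is symmetric with eigenvalues $(1,\dots,1,\theta(|Du|))$. Hence
\begin{align*}
    \la B^{-1}A,(B^{-1}A)^\intercal\ra=(n-1)+\theta^2,\quad \tr(B^{-1}A)=(n-1)+\theta,
\end{align*}
and the Cordes condition \eqref{eq:Cordes-condition-wrtB} becomes
\begin{equation*}
    \delta\bigl((n-1)+\theta^2\bigr)\leq \theta\bigl(2(n-1)-(n-2)\theta\bigr).
\end{equation*}
Under the hypothesis \eqref{eq:Cordes-for-AwrtB}, the right-hand side is bounded below by a positive quantity uniformly for $\theta\in[i_\theta,s_\theta]$, so one may choose $\delta\in(0,1)$ depending only on $n,i_\theta,s_\theta$. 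This is the heart of the argument; the upper bound $s_\theta<2(n-1)/(n-2)$ is precisely what is needed to keep the right-hand side positive. Step (ii) is then trivial: $\la B^{-1}A,(B^{-1}A)^\intercal\ra\geq n-1$.

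Finally for (iii), I invoke Lemma~\ref{lem:Lower-bound-for-product-of-matrix-and-transpose} with the positive definite symmetric matrix $B$, whose eigenvalues lie in $[\min\{1,1+i_b\},\max\{1,1+s_b\}]$; this gives $|BD^2u|^2\leq \kappa\la BD^2u,(BD^2u)^\intercal\ra$ with $\kappa=\kappa(i_b,s_b)$, hence $|DV_b|^2\leq \kappa b(|Du|)^2\la BD^2u,(BD^2u)^\intercal\ra$. Combining all three ingredients with Lemma~\ref{lem:Divergence-structure-observation} yields the claim with constants $c=c(n,i_\theta,s_\theta,i_b,s_b)$ and $C=C(n,i_\theta,s_\theta)$. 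The main obstacle is the Cordes verification in step (i); once the eigenvalue calculation is carried out, the rest is bookkeeping with previously established lemmas.
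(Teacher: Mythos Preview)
Your proposal is correct and follows essentially the same route as the paper's proof: verify the Cordes condition~\eqref{eq:Cordes-condition-wrtB} for $A$ with respect to $B$ via the eigenvalue computation $\la B^{-1}A,(B^{-1}A)^\intercal\ra=n-1+\theta^2$, $\tr(B^{-1}A)=n-1+\theta$; apply Corollary~\ref{cor:General-Cordes} with $M=D^2u$; use Lemma~\ref{lem:Lower-bound-for-product-of-matrix-and-transpose} to pass from $\la BD^2u,(BD^2u)^\intercal\ra$ to $|BD^2u|^2$; and finish with Lemma~\ref{lem:Divergence-structure-observation}. Your step~(ii), bounding $\la B^{-1}A,(B^{-1}A)^\intercal\ra\geq n-1$ to absorb the denominator in Corollary~\ref{cor:General-Cordes} into the constant $C$, is a detail the paper leaves implicit but is indeed needed.
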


\begin{proof}
Let $A$ and $B$ be as in \eqref{eq:AB}.
We compute that
\begin{align*}
    B^{-1}A
    &=I+\frac{\vartheta_a(|Du|)-\vartheta_b(|Du|)}{1+\vartheta_b(|Du|)}\frac{Du\otimes Du}{|Du|^2}. % \\
    %&=I+(\theta(|Du|)-1)\frac{Du\otimes Du}{|Du|^2}.
\end{align*}
Then
\begin{align*}
    \la B^{-1}A,(B^{-1}A)^\intercal\ra 
    %&=\tr\big(B^{-1}AB^{-1}A\big) \\
    %&=\tr\big(\operatorname{diag} \big(1,\ldots,1,(\theta(|Du|))^2 \big)\big) \\
    &=n-1+(\theta(|Du|))^2
\end{align*}
and
\begin{align*}
    \big(\tr(B^{-1}A)\big)^2
    %&=\tr\big(\operatorname{diag} \big(1,\ldots,1,\theta(|Du|) \big)\big) \\
    &=\big(n-1+\theta(|Du|)\big)^2.
\end{align*}
We conclude that $A$ satisfies Cordes condition with respect to $B$ if and only if
\begin{equation} \label{eq:PointwiseCordesofAwrtB}
    (n-1+\delta)\big(n-1+(\theta(|Du|))^2\big)
    \leq\big(n-1+\theta(|Du|)\big)^2
\end{equation}
holds for some $0<\delta<1$. 
In other words, we need that
\begin{align} \label{eq:Upper-bound-for-delta}
    \delta\leq \frac{\big(2(n-1)-(n-2)\theta(|Du|)\big)\theta(|Du|)}{n-1+(\theta(|Du|))^2}.
\end{align}
Indeed, under the condition \eqref{eq:Cordes-for-AwrtB}, the right hand side of \eqref{eq:Upper-bound-for-delta} is uniformly positive and
we may find such $\delta=\delta(n,i_\theta,s_\theta)>0$.

We apply Corollary \ref{cor:General-Cordes} with $M=D^2u$ to obtain
\begin{equation} \label{eq:GeneralFormofCordesUsed}
    c\la BD^2u,(BD^2u)^\intercal \ra \leq
    \la BD^2u,(BD^2u)^\intercal \ra-\big(\tr(BD^2u)\big)^2
    +C\la A,D^2u\ra
\end{equation}
for some $c=c(n,i_\theta,s_\theta)>0$ and $C=C(n,i_\theta,s_\theta)>0$. 

By Lemma \ref{lem:Lower-bound-for-product-of-matrix-and-transpose}
\begin{equation} \label{eq:LowerBoundforProductofMatrixandTransposeUsed}
    \la BD^2u,(BD^2u)^\intercal \ra\geq \Big(\frac{\min\{1,1+i_b\}}{\max\{1,1+s_b\}}\Big)^2|BD^2u|^2.
\end{equation}
The estimates \eqref{eq:GeneralFormofCordesUsed} and \eqref{eq:LowerBoundforProductofMatrixandTransposeUsed} yield that that
\begin{equation} \label{eq:PlainConclusionofCordes}
    c|BD^2u|^2 \leq
    \la BD^2u,(BD^2u)^\intercal \ra-\big(\tr(BD^2u)\big)^2
    +C\la A,D^2u\ra
\end{equation}
where $c=c(n,i_\theta,s_\theta,i_b,s_b)>0$ and $C=C(n,i_\theta,s_\theta)>0$. 
Multiplying the estimate \eqref{eq:PlainConclusionofCordes} by $\big(b(|Du|)\big)^2$ yields that
\begin{align*} 
    c|D(b(|Du|)Du)|^2
    &\leq
    \la D(b(|Du|)Du),(D(b(|Du|)Du))^\intercal \ra-\big(\tr(D(b(|Du|)Du))\big)^2 \\
    &\quad
    +C\big(b(|Du|)\la A,D^2u\ra\big)^2.
\end{align*}
As we employ Lemma \ref{lem:Divergence-structure-observation} and note that 
\begin{equation}
    \diverg(a(|Du|)Du)
    %=a(|Du|)\Big(\Delta u +\vartheta_a(|Du|)\ilN u\Big)
    =a(|Du|)\la A,D^2u\ra,
\end{equation}
the desired estimate follows immediately.
\end{proof}

\begin{corollary} \label{cor:Inequality-with-arbitrary-vectorfield}
Let $W\in C^2(\Om;\Rn)$. Under the assumptions of Lemma \ref{lem:Inequality}, 
\begin{align*}
    c|DV_b|^2
    &\leq
    \diverg\big((D(V_b-W)-\tr(D(V_b-W))I)(V_b-W)\big) \\
    &\quad
    +C\Big(|DW|^2+\Big(\frac{b(|Du|)}{a(|Du|)}\Big)^2\big(\diverg(V_a)\big)^2\Big)
\end{align*}
where $c=c(n,i_\theta,s_\theta,i_b,s_b)>0$ and $C=C(n,i_\theta,s_\theta)>0$.
\end{corollary}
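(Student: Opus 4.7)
The plan is to combine Lemma \ref{lem:Inequality} with a perturbation argument: subtract $W$ from $V_b$ inside the divergence term, compensate for the resulting cross terms via Young's inequality, and absorb. The algebraic bridge is Lemma \ref{lem:Divergence-structure-observation}, which converts the specific divergence structure $\diverg((DX-\tr(DX)I)X)$ into the pointwise quadratic expression $\la DX,DX^\intercal\ra - (\tr(DX))^2$ for every $X\in C^2(\Om;\Rn)$. Applying it once with $X=V_b$, I rewrite the conclusion of Lemma \ref{lem:Inequality} in the equivalent pointwise form
\begin{equation*}
c|DV_b|^2 \leq \la DV_b,DV_b^\intercal\ra - (\tr(DV_b))^2 + C\Big(\frac{b(|Du|)}{a(|Du|)}\Big)^2(\diverg(V_a))^2,
\end{equation*}
and applying it again with $X=V_b-W$ shows that the divergence on the right of the claimed inequality equals $\la D(V_b-W),D(V_b-W)^\intercal\ra - (\tr(D(V_b-W)))^2$. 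The problem thus reduces to comparing these two quadratic forms.

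The key step is the bilinear expansion
\begin{align*}
&\la DV_b,DV_b^\intercal\ra - (\tr(DV_b))^2 - \big[\la D(V_b-W),D(V_b-W)^\intercal\ra - (\tr(D(V_b-W)))^2\big] \\
&\qquad= 2\la DV_b,DW^\intercal\ra - \la DW,DW^\intercal\ra - 2\tr(DV_b)\tr(DW) + (\tr(DW))^2,
\end{align*}
where I use the symmetry $\la M,N^\intercal\ra = \la N,M^\intercal\ra$. I then bound the mixed terms with Young's inequality in a small parameter $\eta>0$: $|2\la DV_b,DW^\intercal\ra|\leq \eta|DV_b|^2 + \eta^{-1}|DW|^2$ and, via $(\tr(M))^2\leq n|M|^2$, $|2\tr(DV_b)\tr(DW)|\leq n\eta|DV_b|^2 + n\eta^{-1}|DW|^2$. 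The pure $W$-terms are already dominated by $|DW|^2$. Choosing $\eta$ so that $(n+1)\eta$ is strictly smaller than the constant $c$ of Lemma \ref{lem:Inequality} lets me absorb the $|DV_b|^2$ pieces into the left-hand side; invoking Lemma \ref{lem:Divergence-structure-observation} in the opposite direction then rewrites the surviving $(V_b-W)$-quadratic form as a divergence, yielding the asserted inequality after renaming constants.

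No substantive difficulty arises; the argument is purely algebraic once one observes that Lemma \ref{lem:Divergence-structure-observation} applies indifferently to $V_b$ and to $V_b-W$, so the divergence-structure identity is stable under perturbation by an arbitrary smooth vector field. The only bookkeeping is to confirm that the final constants inherit the dependence on $n, i_\theta, s_\theta, i_b, s_b$ coming from Lemma \ref{lem:Inequality}.
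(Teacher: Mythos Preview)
Your proof is correct and follows essentially the same route as the paper's: both rewrite Lemma~\ref{lem:Inequality} in the pointwise quadratic form via Lemma~\ref{lem:Divergence-structure-observation}, expand bilinearly after substituting $V_b=(V_b-W)+W$, control the cross terms by Young's inequality, and absorb the resulting $|DV_b|^2$ contribution into the left-hand side. The only cosmetic difference is that the paper carries out the expansion in terms of $DX=D(V_b-W)$ and $DW$ before simplifying, whereas you expand directly in $DV_b$ and $DW$; the two computations are algebraically equivalent.
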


Note that Corollary \ref{cor:Inequality-with-arbitrary-vectorfield} reduces to Lemma \ref{lem:Inequality} if $W\equiv0$.

\begin{proof}[Proof of Corollary \ref{cor:Inequality-with-arbitrary-vectorfield}]
Let us fix $W\in C^2(\Om;\Rn)$.
Write $V_b=X+W$, where $X:=V_b-W$. By Lemma \ref{lem:Inequality} and Lemma \ref{lem:Divergence-structure-observation}
\begin{equation} \label{eq:InequalitywithArbitraryVectorFieldFirstStep}
\begin{aligned}
    c|DV_b|^2
    &\leq
    \la DV_b,(DV_b)^\intercal \ra-\big(\tr(DV_b)\big)^2
    +C\Big(\frac{b(|Du|)}{a(|Du|)}\Big)^2\big(\diverg(V_a)\big)^2 \\
    &=
    \diverg\big((DX-\tr(DX)I)X\big) \\
    &\quad
    +2\big(\la DX,(DW)^\intercal\ra-\tr(DX)\tr(DW)\big) 
    +\la DW,(DW)^\intercal\ra-\big(\tr(DW)\big)^2 \\
    &\quad
    +C\Big(\frac{b(|Du|)}{a(|Du|)}\Big)^2\big(\diverg(V_a)\big)^2.
\end{aligned}    
\end{equation}
We apply Young's inequality to estimate
\begin{equation} \label{eq:YoungtoEstimateDxtimesDW}
\begin{aligned}
    &2\big(\la DX,(DW)^\intercal\ra-\tr(DX)\tr(DW)\big)
    +\la DW,(DW)^\intercal\ra-\big(\tr(DW)\big)^2 \\
    &=
    \la DV_b,(DW)^\intercal\ra-\tr(DV_b)\tr(DW)
    -\la DW,(DW)^\intercal\ra+\big(\tr(DW)\big)^2.\\
    &\leq \frac{c}{2}|DV_b|^2+C|DW|^2,
\end{aligned}
\end{equation}
where $C=C(n,c)>0$.
The desired inequality now follows from \eqref{eq:InequalitywithArbitraryVectorFieldFirstStep} and \eqref{eq:YoungtoEstimateDxtimesDW}.
\end{proof}

%--------------------------------------------------------------------
\section{A boundary identity and its corollaries} \label{sec:Boundary-Identity}

This section is based on \cite{Grisvard1985}*{Section 3.1.1}.
Let $\Om\subset\Rn$ be a bounded domain such that the boundary $\partial\Om$ is smooth. We consider $\partial\Om$ as an $(n-1)$-dimensional smooth submanifold of $\Rn$.
For each boundary point $x\in\partial\Om$, let $\tau_1,\ldots,\tau_{n-1}$ be unit vectors that form an orthonormal basis of the tangent space $T_x\partial\Om$.
Let $\nu\in\Rn$ denote the outward unit normal of $\partial\Om$. Then
$\{\tau_1,\ldots,\tau_{n-1},\nu\}$
forms an orthonormal basis of $\Rn$ at each boundary point. Due to the smoothness of $\partial\Om$, we can assume that these basis vectors are smooth functions $\partial\Om\to\Rn$.

Let $X\in C^1(\partial\Om;\Rn)$ be a vector field.
We write
\begin{equation*} 
   X=X_T+\la X,\nu\ra\nu, %X_\nu,
\end{equation*}
where
\begin{equation} \label{eq:XT}
    X_T:=
    X-\la X,\nu\ra\nu=
    \sum_{i=1}^{n-1}\la X,\tau_i\ra\tau_i
\end{equation}
denotes the part of $X$ that is tangential to the boundary $\partial\Om$.
The tangential divergence of $X$ on the boundary $\partial\Om$ is given by
$$ \diverg_T(X)=\sum_{i=1}^{n-1}\la \frac{\partial X}{\partial s_i},\tau_i\ra $$
where $\frac{\partial}{\partial s_i}$ 
denotes the directional derivative with respect to $\tau_i$.
Let $f\in C^1(\partial\Om)$ be a function. Similarly to \eqref{eq:XT}, the tangential gradient of $f$ on the boundary $\partial\Om$ is given by 
$$ D_Tf=\sum_{i=1}^{n-1}\frac{\partial f}{\partial s_i}\tau_i. $$

\begin{lemma}[\cite{Grisvard1985}*{Equation (3,1,1,8)}] \label{lem:Boundary-identity}
Let $\Om\subset\Rn$ be a bounded domain with a smooth boundary $\partial\Om$. 
If $X\in C^1(\partial\Om;\Rn)$, then
\begin{align*}
    \la DX\cdot X-\diverg(X)X,\nu\ra
    &=
    \la X_T,D_T\la X,\nu\ra\ra-\la X,\nu\ra\diverg_T(X_T) \\
    &\quad
    +\mathcal{B}(X_T,X_T)+\la X,\nu\ra^2\tr(\mathcal{B})
\end{align*}
on the boundary $\partial\Om$. Here $\mathcal{B}$ denotes the second fundamental form of $\partial\Om$.
\end{lemma}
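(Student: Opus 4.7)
The plan is to work in a local orthonormal frame $\{\tau_1,\ldots,\tau_{n-1},\nu\}$ on $\partial\Om$ and to decompose both the vector field $X=X_T+\la X,\nu\ra\nu$ and the derivative operators into tangential and normal parts. The key qualitative observation is that once we assemble $DX\cdot X-\diverg(X)X$ and project onto $\nu$, the only place an honest normal derivative of $X$ appears, namely $\la X,\nu\ra\la\tfrac{\partial X}{\partial\nu},\nu\ra$, is produced identically by the two pieces and therefore cancels. What remains is an object intrinsic to the boundary, and this is precisely what the right-hand side of the identity records.

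First I would expand $DX\cdot X$ against the frame as $\sum_{i=1}^{n-1}\la X,\tau_i\ra\tfrac{\partial X}{\partial s_i}+\la X,\nu\ra\tfrac{\partial X}{\partial \nu}$ and take the $\nu$-component, and in parallel split the ordinary divergence as $\diverg(X)=\diverg_T(X)+\la\tfrac{\partial X}{\partial \nu},\nu\ra$. Subtracting $\la X,\nu\ra\diverg(X)$ from $\la DX\cdot X,\nu\ra$ eliminates the $\la X,\nu\ra\la\tfrac{\partial X}{\partial\nu},\nu\ra$ contribution, leaving only terms that involve $X$ on $\partial\Om$ and its tangential derivatives.

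Next I would substitute $X=X_T+\la X,\nu\ra\nu$ into every remaining tangential derivative. Differentiating the pointwise identities $\la X_T,\nu\ra\equiv 0$ and $|\nu|\equiv 1$ along $\tau_i$ gives $\la\tfrac{\partial X_T}{\partial s_i},\nu\ra=-\la X_T,\tfrac{\partial\nu}{\partial s_i}\ra$ and $\la\tfrac{\partial\nu}{\partial s_i},\nu\ra=0$, and this is the only step at which the geometry of $\partial\Om$ actually enters: the tangential derivatives of $\nu$ are packaged into the second fundamental form. Concretely, the $\la X,\tau_i\ra$-weighted sum of $\la\tfrac{\partial X}{\partial s_i},\nu\ra$ splits into the piece $\la X_T,D_T\la X,\nu\ra\ra$ coming from the $\tfrac{\partial\la X,\nu\ra}{\partial s_i}$ contribution and a piece quadratic in $X_T$ that reads as $\mathcal{B}(X_T,X_T)$ after the appropriate sign convention for $\mathcal{B}$ is fixed. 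A parallel splitting $\diverg_T(X)=\diverg_T(X_T)+\la X,\nu\ra\diverg_T(\nu)$, which uses that $D_T f$ is tangential for any scalar $f$ together with the definitional identity $\diverg_T(\nu)=\tr(\mathcal{B})$, furnishes the $-\la X,\nu\ra\diverg_T(X_T)$ and $\la X,\nu\ra^2\tr(\mathcal{B})$ contributions.

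The step I expect to require the most care is the bookkeeping of signs: the final signs of $\mathcal{B}(X_T,X_T)$ and $\la X,\nu\ra^2\tr(\mathcal{B})$ depend on the convention chosen for $\mathcal{B}$ (outward versus inward normal, and whether $\mathcal{B}(Y,Z)=\la D_Y\nu,Z\ra$ or its negative), and the two occurrences must be produced with the same convention so that they match the statement and the reference \cite{Grisvard1985}*{(3,1,1,8)}. Once the convention is fixed consistently in both the $DX\cdot X$ calculation and the $\diverg_T(\nu)$ calculation, the identity drops out of the decompositions above without any further ingredient.
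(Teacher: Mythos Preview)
The paper does not supply its own proof of this lemma; it simply quotes the identity from \cite{Grisvard1985}*{(3,1,1,8)} and records the two corollaries it needs. Your sketch is correct and is essentially the computation Grisvard carries out: decompose in the frame $\{\tau_1,\ldots,\tau_{n-1},\nu\}$, observe that the normal-derivative contribution $\la X,\nu\ra\la\tfrac{\partial X}{\partial\nu},\nu\ra$ cancels between $\la DX\cdot X,\nu\ra$ and $\la X,\nu\ra\diverg(X)$, and then read off the remaining tangential terms using $\la X_T,\nu\ra\equiv 0$ and $|\nu|\equiv 1$. Your caution about the sign convention for $\mathcal{B}$ is well placed: in the convention compatible with Corollary~\ref{cor:Estimate-for-flow-of-X-across-boundary-convex} one has $\diverg_T(\nu)=-\tr(\mathcal{B})$ rather than $+\tr(\mathcal{B})$, so the sign of that term in your last paragraph should be checked when you write out the details.
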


\begin{corollary} \label{cor:Estimate-for-flow-of-X-across-boundary}
Under the assumptions of Lemma \ref{lem:Boundary-identity}, if in addition $X_T=0$ on the boundary $\partial\Om$, then
\begin{align*}
    \big|\la DX\cdot X-\diverg(X)X,\nu\ra\big|
    &\leq C|\mathcal{B}||X|^2
\end{align*}
on the boundary $\partial\Om$, where $C=C(n)>0$.
\end{corollary}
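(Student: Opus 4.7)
The plan is to apply Lemma \ref{lem:Boundary-identity} directly and observe that the hypothesis $X_T = 0$ annihilates most of the terms on the right hand side. Indeed, the first three terms in the identity,
$$ \la X_T,D_T\la X,\nu\ra\ra,\quad \la X,\nu\ra\diverg_T(X_T),\quad \mathcal{B}(X_T,X_T), $$
all vanish identically on $\partial\Om$ once $X_T\equiv 0$ there. (For the middle term this uses that $\diverg_T$ of the zero tangential field is zero; for the third, bilinearity of $\mathcal{B}$.) Hence
$$ \la DX\cdot X-\diverg(X)X,\nu\ra = \la X,\nu\ra^2\,\tr(\mathcal{B}). $$

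The remaining step is to convert the right hand side into the desired form. Since $X_T=0$, we have $X=\la X,\nu\ra\nu$, so $|X|^2 = \la X,\nu\ra^2$. It remains to control $|\tr(\mathcal{B})|$ by $|\mathcal{B}|$. Because $\mathcal{B}$ is an $(n-1)\times(n-1)$ matrix, the Cauchy--Schwarz inequality applied to $\tr(\mathcal{B})=\la \mathcal{B}, I_{n-1}\ra$ yields $|\tr(\mathcal{B})|\leq\sqrt{n-1}\,|\mathcal{B}|$. Combining these two observations gives
$$ \bigl|\la DX\cdot X-\diverg(X)X,\nu\ra\bigr| = |X|^2\,|\tr(\mathcal{B})| \leq \sqrt{n-1}\,|\mathcal{B}|\,|X|^2, $$
which is the desired estimate with $C=\sqrt{n-1}$.

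There is no real obstacle here: the corollary is essentially a direct reading of Lemma \ref{lem:Boundary-identity} under the normality assumption $X_T=0$, followed by a one-line dimension-dependent comparison between the trace norm and Hilbert--Schmidt norm of $\mathcal{B}$. The only thing to be mildly careful about is recording that the constant $C$ depends only on $n$, which is immediate from the Cauchy--Schwarz step.
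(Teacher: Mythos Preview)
Your proof is correct and is exactly the intended argument: the paper leaves this corollary unproved precisely because it is a direct reading of Lemma~\ref{lem:Boundary-identity} under the assumption $X_T=0$, followed by the elementary bound $|\tr(\mathcal{B})|\leq\sqrt{n-1}\,|\mathcal{B}|$.
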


\begin{corollary} \label{cor:Estimate-for-flow-of-X-across-boundary-convex}
Under the assumptions of Lemma \ref{lem:Boundary-identity}, if in addition $\Om$ is convex and $X_T=0$ on the boundary $\partial\Om$, then
\begin{align*}
    \la DX\cdot X-\diverg(X)X,\nu\ra
    \leq 0
\end{align*}
on the boundary $\partial\Om$.
\end{corollary}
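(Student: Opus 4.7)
The plan is to specialize the boundary identity of Lemma \ref{lem:Boundary-identity} to the present situation and observe that three of the four terms on the right-hand side vanish for trivial reasons, leaving a single term whose sign is controlled by convexity.

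More concretely, I would proceed as follows. First, since $X_T=0$ pointwise on $\partial\Om$, we immediately get
\begin{equation*}
    \la X_T,D_T\la X,\nu\ra\ra=0
    \quad\text{and}\quad
    \mathcal{B}(X_T,X_T)=0.
\end{equation*}
Next, because $X_T$ vanishes identically on the submanifold $\partial\Om$, all of its tangential derivatives $\partial X_T/\partial s_i$ vanish there as well; in particular $\diverg_T(X_T)=0$ on $\partial\Om$. Substituting these three vanishing terms into the identity of Lemma \ref{lem:Boundary-identity} collapses it to
\begin{equation*}
    \la DX\cdot X-\diverg(X)X,\nu\ra
    =\la X,\nu\ra^2\tr(\mathcal{B})
    \quad\text{on }\partial\Om.
\end{equation*}

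It then remains to invoke the convexity hypothesis. With the sign convention adopted in \cite{Grisvard1985} (the same one implicit in Lemma \ref{lem:Boundary-identity}), the second fundamental form $\mathcal{B}$ of the boundary of a convex domain is negative semidefinite, so in particular $\tr(\mathcal{B})\leq0$ pointwise on $\partial\Om$. Since $\la X,\nu\ra^2\geq0$, the product $\la X,\nu\ra^2\tr(\mathcal{B})$ is $\leq0$, which is exactly the desired inequality.

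The only real subtlety is the orientation convention for $\mathcal{B}$; this can be cross-checked against Corollary \ref{cor:Estimate-for-flow-of-X-across-boundary}, whose conclusion $|\la DX\cdot X-\diverg(X)X,\nu\ra|\leq C|\mathcal{B}||X|^2$ is consistent with the reduced identity above. No further calculation is required.
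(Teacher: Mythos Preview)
Your proposal is correct and is exactly the intended argument: the paper states this corollary without proof, as an immediate consequence of Lemma~\ref{lem:Boundary-identity}, and your reduction to the single term $\la X,\nu\ra^2\tr(\mathcal{B})$ together with the sign of $\tr(\mathcal{B})$ under convexity is precisely what is meant. Your remark about checking the orientation convention against Corollary~\ref{cor:Estimate-for-flow-of-X-across-boundary} is a nice sanity check but not strictly necessary.
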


%--------------------------------------------------------------------
\section{Weighted trace inequality} \label{sec:BoundaryEstimates}

In this section we state a weighted trace inequality from \cite{Cianchi2019} which is a crucial part of the proof of Theorem \ref{thm:Global-estimate}. We also discuss about the meaning of the boundary regularity assumption \eqref{eq:Smallness-of-the-limit-of-Kquantity}.  
All the results in this section are from \cites{Cianchi2018,Cianchi2019}. 

We assume that $\Om\subset\Rn$ is a bounded Lipschitz domain
with diameter $d_\Om>0$ and Lipschitz constant $L_\Om>0$. Suppose in addition that $\partial\Om\in W^{2,1}$. Recall that $\mathcal{K}_{\Om}\colon(0,1)\to[0,\infty]$ is defined by
\begin{align} \label{eq:Kquantity}
    \mathcal{K}_{\Om}(r):=\sup_{x\in\partial\Om}
    \sup_{E\subset\partial\Om\cap B_r(x)}
    \frac{\int_E|\mathcal{B}|d\mathcal{H}^{n-1}}{\operatorname{cap}_{B_1(x)}(E)}
\end{align}
where $\mathcal{B}$ denotes the weak second fundamental form of $\partial\Om$ and $\operatorname{cap}_{B_1(x)}(E)$ denotes the capacity of a set $E$ relative to a ball $B_1(x)$.
%Such function $\mathcal{K}$ is obviously non-decreasing, and its behaviour depends on the regularity of the domain $\Om$.

The following Theorem is a consequence of the Adams' potential embedding theorem, see \cite{Adams1996}*{Theorem 7.2.1}. Here and in similar occurrences in what follows, the dependence of a constant on $L_\Om$ and $d_\Om$ is understood just via an upper bound for them.

\begin{lemma}[\cite{Cianchi2019}*{Lemma 3.5}] \label{lem:Weighted-trace-inequality}
Suppose that $x_0\in\partial\Om$ and let $r>0$ be small. 
There exists a constant $C=C(n,L_\Om,d_\Om)>0$ such that
\begin{align} \label{eq:Weighted-trace-inequality}
    \int_{\partial\Om\cap B_r(x_0)}|v|^2|\mathcal{B}|d\mathcal{H}^{n-1}
    \leq C\mathcal{K}_{\Om}(r)
    \int_{\Om\cap B_r(x_0)}|Dv|^2dx
\end{align}
holds for any $v\in C^1_0(\overline{\Om}\cap B_r(x_0))$.
\end{lemma}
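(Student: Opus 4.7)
The plan is to reduce the weighted trace inequality to a capacitary trace criterion of Maz'ya/Adams type, where the quantity $\mathcal{K}_\Omega(r)$ appears naturally as the capacitary characteristic of the measure $\mu := |\mathcal{B}|\,d\mathcal{H}^{n-1}\llcorner(\partial\Omega\cap B_r(x_0))$.

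First, I would eliminate the Lipschitz boundary by extending $v$. Since $\partial\Omega$ is Lipschitz with constant $L_\Omega$, for $r$ small enough (depending only on $n$ and $L_\Omega$) we can locally straighten the boundary near $x_0$ and use a reflection extension to produce $\tilde v\in C^{0,1}_0(B_1(x_0))$ with $\tilde v|_{\partial\Omega} = v|_{\partial\Omega}$ and
\begin{equation*}
\int_{B_1(x_0)}|D\tilde v|^2\,dx \le C(n,L_\Omega)\int_{\Omega\cap B_r(x_0)}|Dv|^2\,dx.
\end{equation*}
In this way the trace integral on the left-hand side becomes $\int_{\R^n}|\tilde v|^2\,d\mu$, and the task is reduced to a standard trace inequality on the full space for a test function supported in $B_1(x_0)$.

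Next, I would invoke the capacitary characterization of admissible trace measures (Maz'ya's theorem; equivalently, Adams' potential embedding theorem as cited). It asserts that for every Radon measure $\mu$ whose support lies in $B_1(x_0)$,
\begin{equation*}
\int_{B_1(x_0)} |w|^2\,d\mu \le C(n)\,\|\mu\|_*\int_{B_1(x_0)}|Dw|^2\,dx
\quad\text{for every } w\in C^1_0(B_1(x_0)),
\end{equation*}
where
\begin{equation*}
\|\mu\|_* := \sup_{E\,\text{compact}}\frac{\mu(E)}{\operatorname{cap}_{B_1(x_0)}(E)}.
\end{equation*}
Applied to our $\mu$, whose support is contained in $\partial\Omega\cap B_r(x_0)$, the supremum can be restricted to compact subsets $E\subset\partial\Omega\cap B_r(x_0)$, and for each such $E$ the ratio $\mu(E)/\operatorname{cap}_{B_1(x_0)}(E) = \int_E|\mathcal{B}|\,d\mathcal{H}^{n-1}/\operatorname{cap}_{B_1(x_0)}(E)$ is dominated by $\mathcal{K}_\Omega(r)$ by the very definition \eqref{eq:Kquantity}. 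Hence $\|\mu\|_*\le\mathcal{K}_\Omega(r)$.

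Combining the two steps with $w = \tilde v$ yields
\begin{equation*}
\int_{\partial\Omega\cap B_r(x_0)}|v|^2|\mathcal{B}|\,d\mathcal{H}^{n-1}
= \int |\tilde v|^2\,d\mu
\le C(n)\mathcal{K}_\Omega(r)\int_{B_1(x_0)}|D\tilde v|^2\,dx
\le C(n,L_\Omega,d_\Omega)\,\mathcal{K}_\Omega(r)\int_{\Omega\cap B_r(x_0)}|Dv|^2\,dx,
\end{equation*}
which is the claimed estimate. The delicate step is the first one: producing the extension $\tilde v$ with the correct controlled Dirichlet energy while keeping its boundary trace equal to $v$, since this is where the Lipschitz character of $\partial\Omega$ (and the dependence on $L_\Omega$ and $d_\Omega$) enters the constant. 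The capacitary trace inequality of Maz'ya/Adams, once the extension is in place, is entirely classical and contributes only a dimensional constant.
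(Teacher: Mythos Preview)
The paper does not supply its own proof of this lemma; it merely cites \cite{Cianchi2019}*{Lemma 3.5} and remarks that the statement ``is a consequence of the Adams' potential embedding theorem, see \cite{Adams1996}*{Theorem 7.2.1}.'' Your proposal is a correct and complete sketch of precisely that route: you extend $v$ across the Lipschitz boundary to a compactly supported function in $B_1(x_0)$ with controlled Dirichlet energy, then apply the Maz'ya--Adams capacitary characterization of trace measures, and finally observe that the capacitary norm of the measure $|\mathcal{B}|\,d\mathcal{H}^{n-1}\llcorner(\partial\Omega\cap B_r(x_0))$ is bounded by $\mathcal{K}_\Omega(r)$ by definition. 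This is exactly the argument the paper alludes to, so there is nothing to add.
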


Note that if $\mathcal{K}_{\Om}(r)=\infty$, then the inequality \eqref{eq:Weighted-trace-inequality} is trivially true. Obviously we need $\mathcal{K}_{\Om}(r)<\infty$. Moreover, we will need that $\mathcal{K}_\Om(r)$ is sufficiently small when $r\to0$, which is the condition \eqref{eq:Smallness-of-the-limit-of-Kquantity}.

By \cite{Cianchi2018}, in order to guarantee that $\mathcal{K}_{\Om}(r)$ is finite, it suffices to assume that the 
weak second fundamental form $\mathcal{B}$ belongs to a weak Lebesgue space (for $n\geq3$) or to a weak Zygmund space (for $n=2$).

For the definition of the weak Lebesgue space and weak Zygmund space, we need to introduce some preliminary concepts. 
Denote $\mu:=\mathcal{H}^{n-1}\big|_{\partial\Om}$. That is, $\mu$ is the restriction of the $(n-1)$-dimensional Hausdorff measure to the boundary $\partial\Om$, so that $\mu$ is a natural measure on $\partial\Om$.

The distribution function of ($\mu$-measurable) function $\psi\colon\partial\Om\to\R$ is
\begin{equation}
    \mu_\psi(\lambda)=\mu(\{x\in\partial\Om:|\psi(x)|>\lambda\})
    \quad\text{for all }\lambda>0.
\end{equation}
Given the distribution function $\mu_\psi$, we define the decreasing rearrangement of $\psi$, denoted by $\psi^*$, as
$$ \psi^*(t)
:=\sup\{\lambda>0:\mu_\psi(\lambda)>t\}
\quad\text{for all }t>0.$$
It holds that
\begin{equation}\label{eq:Rearrangement-integral}
    \int_{\partial\Om}|\psi|d\mu
    =\int_0^{\mu(\partial\Om)} \psi^*(s)ds.
\end{equation}

Let $q>1$. The weak Lebesgue space $L^{q,\infty}(\partial\Om)$ is the space of all $\mu$-measurable functions $\psi$ such that
\begin{equation}
    \Vert \psi\Vert_{L^{q,\infty}(\partial\Om)}
    :=\sup_{0<s<\mu(\partial\Om)}s^{\frac{1}{q}-1}\int_0^s \psi^*(t)dt<\infty
\end{equation}
The weak Zygmund space $L^{1,\infty}\log L(\partial\Om)$ is the space of all $\mu$-measurable functions $\psi$ such that
\begin{equation}
    \Vert \psi\Vert_{L^{1,\infty}\log L (\partial\Om)}
    :=\sup_{0<s<\mu(\partial\Om)}\log\Big(1+\frac{C}{s}\Big)\int_0^s \psi^*(t)dt<\infty.
\end{equation}
By \cite{Cianchi2018}*{Proof of Theorem 2.4}, see also  Lemmas 3.5 and 3.7 in \cite{Cianchi2019},
\begin{equation}
\begin{aligned}
    \mathcal{K}_{\Om}(r)\lesssim
    \begin{cases}
    \displaystyle{
    \sup_{x\in\partial\Om}\|\mathcal{B}\|_{L^{n-1,\infty}(\partial\Om\cap B_r(x))}}
    \quad&\text{if }n\geq 3, \\
    \displaystyle{
    \sup_{x\in\partial\Om}\|\mathcal{B}\|_{L^{1,\infty}\log L(\partial\Om\cap B_r(x))}}
    \quad&\text{if }n=2,
    \end{cases}
\end{aligned}    
\end{equation}
up to some positive constant depending on $n$, $d_\Om$ and $L_\Om$.
In particular, if $\mathcal{B}\in L^{n-1,\infty}(\partial\Om)$ for $n\geq 3$ or $\mathcal{B}\in L^{1,\infty}\log L(\partial\Om)$ for $n=2$, then $\mathcal{K}_{\Om}(r)<\infty$ for all $r\in(0,1)$ sufficiently small.
The smallness assumption \eqref{eq:Smallness-of-the-limit-of-Kquantity} for $\mathcal{K}_\Om$ is then certainly satisfied if
\begin{equation} \label{eq:CM-assumption-ngeq3}
    \lim_{r\to0}
    \Big(\sup_{x\in\partial\Om}\|\mathcal{B}\|_{L^{n-1,\infty}(\partial\Om\cap B_r(x))}\Big)
    <\mathcal{K}'_0
    \quad\text{if }n\geq 3,
\end{equation}
or
\begin{equation} \label{eq:CM-assumption-neq2}
    \lim_{r\to0}
    \Big(\sup_{x\in\partial\Om}\|\mathcal{B}\|_{L^{1,\infty}\log L(\partial\Om\cap B_r(x))}\Big)
    <\mathcal{K}'_0
    \quad\text{if }n=2,
\end{equation}
with a suitable $\mathcal{K}'_0=\mathcal{K}'_0(n,p,d_\Om,L_\Om)>0$.
The assumptions \eqref{eq:CM-assumption-ngeq3} and \eqref{eq:CM-assumption-neq2} were the main boundary regularity assumptions used in \cite{Cianchi2018}*{Theorem 2.4}. The quantity $\mathcal{K}_\Om$ was introduced in the later work \cite{Cianchi2019}. 
By \cite{Cianchi2019}*{Remark 2.5}, if $\partial\Om\in C^2$, then 
$$ \lim_{r\to0}\mathcal{K}_\Om(r)=0, $$
and thus \eqref{eq:Smallness-of-the-limit-of-Kquantity} holds trivially.

%--------------------------------------------------------------------
\section{Regularized case} \label{sec:Regularization}

In this section we prove a regularized version of our main theorem.
Let $\Om\subset\Rn$ be a bounded domain with smooth boundary $\partial\Om$. 
We denote the diameter of $\Om$ by $d_\Om$ and the Lipschitz constant of the boundary $\partial\Om$ by $L_\Om$. 

Let $\varphi\in C^\infty(\overline{\Om})$. For $1<p<\infty$ and $0<\epsilon<1$, consider the problem of minimizing the regularized $p$-energy functional
$$  \int_\Om(|Dv|^2+\epsilon)^{p/2}dx $$
among $v\in \varphi+W^{1,p}_0(\Om)$. By the standard methods of calculus of variations, there exists a unique minimizer $u\in \varphi+W^{1,p}_0(\Om)$. Moreover, the minimizer $u$ is a weak solution to the Dirichlet problem
\begin{equation} \label{eq:Dirichlet-problem-regularized}
\begin{cases}
\begin{aligned}
\diverg(a(|Du|)Du)=0 &\quad\text{in }\Om; \\
u=\varphi &\quad\text{on }\partial \Om,
\end{aligned}
\end{cases}
\end{equation}
where $a\colon[0,\infty)\to(0,\infty)$ is given by
$$ a(t):=\big(t^2+\epsilon\big)^{\frac{p-2}{2}} 
\quad\text{for all }t\geq 0. $$
By the classical regularity theory \cite{GilbargTrudninger}, we have $u\in C^\infty(\overline{\Om})$.

Denote $V_a:=a(|Du|)Du$. Then $V_a$ is of divergence free by the PDE in \eqref{eq:Dirichlet-problem-regularized}. Let $\beta>-1$ and consider the vector field $V_b=b(|Du|)Du$ where
$$ b(t):=\big(t^2+\epsilon\big)^{\frac{\beta}{2}} 
\quad\text{for all }t\geq 0. $$ 
The functions $a$ and $b$ satisfy the assumption \eqref{eq:Ellipticity-bounds} with
\begin{align*}
    i_a=\min\{p-2,0\}\quad\text{and}\quad s_a=\max\{p-2,0\},
\end{align*}
and
\begin{align*}
    i_b=\min\{\beta,0\}\quad\text{and}\quad s_b=\max\{\beta,0\}.
\end{align*}
Moreover,
\begin{align*}
    \theta(t)=
    \frac{1+\vartheta_a(t)}{1+\vartheta_b(t)}=
    \frac{(p-1)t^2+\epsilon}{(\beta+1)t^2+\epsilon}
\end{align*}
and
\begin{align*}
    i_\theta=\min\Big\{\frac{p-1}{\beta+1},1\Big\}\quad\text{and}\quad 
    s_\theta
    =\max\Big\{\frac{p-1}{\beta+1},1\Big\}.
\end{align*}
In particular \eqref{eq:Cordes-for-AwrtB} is satisfied if and only if 
\begin{align*}
    \frac{p-1}{\beta+1}<\frac{2(n-1)}{n-2},
\end{align*}
or equivalently
\begin{align} \label{eq:Condition-for-beta}
    \beta>-1+\frac{(n-2)(p-1)}{2(n-1)}.
\end{align}

The following auxiliary lemma can be applied to derive either local or global estimates.

\begin{lemma} \label{lem:Auxiliary-Lemma}
Let $1<p<\infty$ and let $u\in C^\infty(\overline{\Om})$ be a solution to \eqref{eq:Dirichlet-problem-regularized}. If
$$ \beta>-1+\frac{(n-2)(p-1)}{2(n-1)}, $$
then there exists a constant $C=C(n,p,\beta)>0$ such that 
\begin{equation} \label{eq:Local-estimate-for-interior-and-boundary}
\begin{aligned}
    \int_\Om|DV_b|^2\eta^2dx
    &\leq 
    C\int_{\partial\Om}\la (D(V_b-W)-\tr(D(V_b-W))I)(V_b-W),\nu\ra\eta^2d\mathcal{H}^{n-1} \\
    &\quad
    +C\int_\Om|V_b-W|^2|D\eta|^2dx
    +C\int_{\Om}|DW|^2\eta^2dx % \\
    %&\quad
    %+C\int_\Om\Big(\frac{b(|Du|)}{a(|Du|)}\Big)^2f^2\eta^2dx.
\end{aligned}    
\end{equation}
for any $\eta\in C^\infty(\Rn)$ and for any $W\in C^2(\Om;\Rn)$.
\end{lemma}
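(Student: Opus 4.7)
The plan is to apply Corollary \ref{cor:Inequality-with-arbitrary-vectorfield} pointwise, multiply by $\eta^2$, integrate, use the divergence theorem on the divergence term, and then absorb the resulting cross term via Young's inequality.

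First, since $u$ solves \eqref{eq:Dirichlet-problem-regularized}, the vector field $V_a = a(|Du|)Du$ is divergence-free in $\Om$. The hypothesis $\beta > -1 + \frac{(n-2)(p-1)}{2(n-1)}$ is precisely \eqref{eq:Condition-for-beta}, which was shown above to be equivalent to the Cordes condition \eqref{eq:Cordes-for-AwrtB} for the pair $(a,b)$. Moreover $i_\theta, s_\theta, i_b, s_b$ depend only on $n, p, \beta$ (and not on $\epsilon$), so the constants delivered by Corollary \ref{cor:Inequality-with-arbitrary-vectorfield} depend only on $n, p, \beta$. Applying that corollary and discarding the $(\diverg(V_a))^2$ term yields the pointwise inequality
\[
c|DV_b|^2 \leq \diverg\bigl(M(V_b-W)\bigr) + C|DW|^2
\quad\text{in }\Om,
\]
where $M := D(V_b-W) - \tr(D(V_b-W))I$.

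Next, I would multiply by $\eta^2$ and integrate over $\Om$. The divergence theorem gives
\[
\int_\Om \diverg\bigl(M(V_b-W)\bigr)\eta^2\,dx
= \int_{\partial\Om}\la M(V_b-W),\nu\ra\eta^2\,d\mathcal{H}^{n-1}
- 2\int_\Om \eta\,\la M(V_b-W),D\eta\ra\,dx.
\]
The boundary term is precisely the one appearing in \eqref{eq:Local-estimate-for-interior-and-boundary}. For the remaining volume term, I would use the elementary bound $|M|\leq C(n)|D(V_b-W)|\leq C(n)(|DV_b|+|DW|)$ together with Young's inequality:
\[
2\eta|D\eta|\,|M|\,|V_b-W|
\leq \delta\,\eta^2 |DV_b|^2 + \delta\,\eta^2 |DW|^2 + C_\delta\,|V_b-W|^2|D\eta|^2.
\]
Choosing $\delta > 0$ small enough (depending only on $n, p, \beta$) so that $\delta\,\eta^2|DV_b|^2$ is absorbed into the left-hand side, and merging $\delta\,\eta^2|DW|^2$ with the existing $C\,\eta^2|DW|^2$ contribution, yields \eqref{eq:Local-estimate-for-interior-and-boundary}.

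This proof is essentially routine once the pointwise inequality \eqref{eq:Intro-Inequality-2} is in place. The only point requiring care is verifying that the divergence-free property of $V_a$ and the equivalence of \eqref{eq:Condition-for-beta} with \eqref{eq:Cordes-for-AwrtB} both hold uniformly in $\epsilon \in (0,1)$, so that the constants $c, C$ can be chosen independently of $\epsilon$. This is where the specific choice $a(t)=(t^2+\epsilon)^{(p-2)/2}$, $b(t)=(t^2+\epsilon)^{\beta/2}$ pays off: the computed $\vartheta_a, \vartheta_b$ yield $\theta(t) = \frac{(p-1)t^2+\epsilon}{(\beta+1)t^2+\epsilon}$, which lies in the $\epsilon$-independent interval with endpoints $\min\{\frac{p-1}{\beta+1},1\}$ and $\max\{\frac{p-1}{\beta+1},1\}$.
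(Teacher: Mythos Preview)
Your proof is correct and follows essentially the same route as the paper: apply Corollary \ref{cor:Inequality-with-arbitrary-vectorfield}, use the PDE to kill the $(\diverg V_a)^2$ term, multiply by $\eta^2$, integrate, apply the divergence theorem, and absorb the cross term via Young's inequality. Your additional remark that $i_\theta,s_\theta,i_b,s_b$ are $\epsilon$-independent is a worthwhile observation that the paper leaves implicit in the dependence $C=C(n,p,\beta)$.
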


\begin{proof}
Fix $\eta\in C^\infty(\Rn)$ and $W\in C^2(\Om;\Rn)$. Denote $X:=V_b-W$.
By Corollary \ref{cor:Inequality-with-arbitrary-vectorfield}
\begin{equation} \label{eq:IntegratedInequalitywithArbitraryVectorField}
\begin{aligned}
    c\int_\Om|DV_b|^2\eta^2dx
    &\leq     
    \int_\Om\Big(\diverg\big((DX-\tr(DX)I)X \big)\Big)\eta^2dx \\
    &\quad
    +C\int_\Om|DW|^2\eta^2dx +C\int_\Om\Big(\frac{b(|Du|)}{a(|Du|)}\Big)^2(\diverg(V_a))^2\eta^2dx.
\end{aligned}
\end{equation}
where
\begin{equation} \label{eq:divVa-vanishes-by-PDE}
   \diverg(V_a)=\diverg(a(|Du|)Du)=0 
\end{equation}
by the PDE in \eqref{eq:Dirichlet-problem-regularized}.

By Gauss divergence theorem
\begin{equation} \label{eq:DivergenceTheoremAppliedonDivergenceTerm}
\begin{aligned}
    &\int_\Om\Big(\diverg\big((DX-\tr(DX)I)X \big)\Big)\eta^2dx \\
    &\quad
    =
    \int_{\partial\Om}\la (DX-\tr(DX)I)X,\nu\ra \eta^2d\mathcal{H}^{n-1} 
    -2\int_\Om\la (DX-\tr(DX)I)X,D\eta\ra\eta dx \\
    &\quad
    \leq 
    \int_{\partial\Om}\la (DX-\tr(DX)I)X,\nu\ra \eta^2d\mathcal{H}^{n-1} 
    +C\int_\Om|DX||X||D\eta||\eta| dx.
\end{aligned}
\end{equation}
We combine \eqref{eq:IntegratedInequalitywithArbitraryVectorField}, \eqref{eq:divVa-vanishes-by-PDE} and \eqref{eq:DivergenceTheoremAppliedonDivergenceTerm} to get
\begin{equation} \label{eq:IntegratedInequalityandGaussCombined}
\begin{aligned}
    c\int_\Om|DV_b|^2\eta^2dx
    &\leq \int_{\partial\Om}\la (DX-\tr(DX)I)X,\nu\ra\eta^2d\mathcal{H}^{n-1} \\
    &\quad
    +C\int_\Om|DX||X||D\eta||\eta|dx \\
    &\quad
    +C\int_{\Om}|DW|\eta^2dx
    +C\int_\Om\Big(\frac{b(|Du|)}{a(|Du|)}\Big)^2(\diverg(V_a))^2\eta^2dx.
\end{aligned}    
\end{equation}
The claim follows from \eqref{eq:IntegratedInequalityandGaussCombined} by using triangle inequality and Young's inequality.
\end{proof}

\begin{remark} \label{rem:Previous-work}
If the support of $\eta$ lies inside $\Om$, then the boundary integral in \eqref{eq:Local-estimate-for-interior-and-boundary} vanishes and we can easily derive local estimates. 
More precisely, given some concentric balls $B_r\subset B_{2r}\subset\subset\Om$, select a cutoff function $\eta\in C^\infty_0(\Rn)$ such that
$$ \spt(\eta)\subset B_{2r},\quad 
\eta\equiv 1\enskip\text{in}\enskip B_r
\quad\text{and}\quad
|D\eta|\leq \frac{10}{r}, $$
and set
$$ W\equiv (V_b)_{B_{2r}}=\fint_{B_{2r}}V_bdx $$
in Lemma \ref{lem:Auxiliary-Lemma}.
Then it follows easily from \eqref{eq:Local-estimate-for-interior-and-boundary} that
\begin{equation}
\begin{aligned}
    \int_{B_r}|DV_b|^2dx
    \leq     
    \frac{C}{r^2}\int_{B_{2r}}|V_b-(V_b)_{B_{2r}}|^2 dx. 
    %+C\int_{B_{2r}}\Big(\frac{b(|Du|)}{a(|Du|)}\Big)^2f^2dx.
\end{aligned}
\end{equation}
for some $C=C(n,p,\beta)$.
By letting $\epsilon\to0$ we recover Theorem 1.1 from \cite{Sarsa2022}.
\end{remark}

\begin{remark}
Suppose that $\Om\subset\Rn$ is convex. If $1<p<3+\frac{2}{n-2}$, then
Lemma \ref{lem:Auxiliary-Lemma} is applicable.
We select $\eta\equiv1$ and $W=D\varphi$ in Lemma \ref{lem:Auxiliary-Lemma} to find
\begin{equation} \label{eq:Estimate-with-eta-identically-one}
\begin{aligned}
    \int_\Om|D^2u|^2dx
    &\leq 
    C\int_{\partial\Om}\la (D^2u-D^2\varphi)-\tr(D^2u-D^2\varphi))I)(Du-D\varphi),\nu\ra d\mathcal{H}^{n-1} \\
    &\quad
    +C\int_{\Om}|D^2\varphi|^2dx.,
\end{aligned}    
\end{equation}
for some $C=C(n,p)>0$.
Since $D_Tu=D_T\varphi$ on the boundary $\partial \Om$, we can apply Corollary \ref{cor:Estimate-for-flow-of-X-across-boundary-convex} to find that the boundary integral on the right hand side of \eqref{eq:Estimate-with-eta-identically-one} is always nonpositive. Consequently 
\begin{equation} \label{eq:Estimate-convex}
\begin{aligned}
    \int_\Om|D^2u|^2dx
    &\leq 
    C\int_{\Om}|D^2\varphi|^2dx,
\end{aligned}    
\end{equation}
for some $C=C(n,p)>0$.
The estimate \eqref{eq:Estimate-convex} can be derived by using the tools presented in \cite{Manfredi1988}*{Section 2}. See also \cite{Talenti1965}.
\end{remark}

For the proof of the following version of Sobolev's inequality, see \cite{Mazya2011}*{Proof of Theorem 1.4.6/1}. See also \cite{Cianchi2019}*{Lemma 3.4}.

\begin{lemma} \label{lem:Version-of-Sobolev}
Let $\Om\subset\Rn$ be a bounded Lipschitz domain with diameter $d_\Om>0$ and Lipschitz constant $L_\Om>0$. Then for any $\sigma>0$ we can find $C=C(\sigma,n,d_\Om,L_\Om)>0$ such that
\begin{align*}
    \int_\Om|v|^2dx
    &\leq \sigma\int_\Om|Dv|^2dx+C\Big(\int_\Om|v|dx\Big)^{2}
\end{align*}
for all $v\in W^{1,2}(\Om)$.
\end{lemma}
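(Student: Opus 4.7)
The plan is to argue by contradiction, leveraging the compact embedding $W^{1,2}(\Om)\hookrightarrow L^2(\Om)$ provided by the Rellich--Kondrachov theorem on bounded Lipschitz domains, whose constants depend only on $n$, $d_\Om$ and $L_\Om$. Fix $\sigma>0$ and suppose the claim fails. Then for every $k\in\N$ I could find $v_k\in W^{1,2}(\Om)$ with
\[
\int_\Om|v_k|^2\,dx>\sigma\int_\Om|Dv_k|^2\,dx+k\Big(\int_\Om|v_k|\,dx\Big)^2.
\]
After rescaling so that $\int_\Om|v_k|^2\,dx=1$, I would get $\int_\Om|Dv_k|^2\,dx<1/\sigma$ and $\int_\Om|v_k|\,dx<1/\sqrt{k}$. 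The sequence $(v_k)$ is thus bounded in $W^{1,2}(\Om)$, so after passing to a subsequence $v_k\to v$ strongly in $L^2(\Om)$ (and hence in $L^1(\Om)$ since $\Om$ is bounded) and weakly in $W^{1,2}(\Om)$. The limit would satisfy $\int_\Om|v|^2\,dx=1$ and $\int_\Om|v|\,dx=0$; the latter forces $v\equiv0$ a.e., contradicting the former.

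A more quantitative alternative would be to invoke a Gagliardo--Nirenberg type interpolation on $\Om$, namely
\[
\|v\|_{L^2(\Om)}\leq C\|v\|_{W^{1,2}(\Om)}^{\theta}\|v\|_{L^1(\Om)}^{1-\theta},\qquad \theta=\frac{n}{n+2}\in(0,1),
\]
with $C=C(n,d_\Om,L_\Om)$ obtained from the Euclidean version via a bounded extension operator for Lipschitz domains. Squaring and applying Young's inequality with exponents $1/\theta$ and $1/(1-\theta)$ yields, for every $\sigma'>0$,
\[
\|v\|_{L^2}^2\leq\sigma'\bigl(\|v\|_{L^2}^2+\|Dv\|_{L^2}^2\bigr)+C_{\sigma'}\|v\|_{L^1}^2.
\]
Absorbing the $\sigma'\|v\|_{L^2}^2$ term into the left-hand side and choosing $\sigma'=\sigma/(1+\sigma)$ would then produce the stated inequality with $C=C(\sigma,n,d_\Om,L_\Om)$.

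The only real obstacle is bookkeeping the dependence of the Rellich--Kondrachov (or Gagliardo--Nirenberg) constant on the domain geometry through $n$, $d_\Om$ and $L_\Om$ alone. For bounded Lipschitz domains this is standard: one covers $\overline{\Om}$ by finitely many balls on which $\partial\Om$ is the graph of a Lipschitz function, flattens via bi-Lipschitz maps with norms controlled by $L_\Om$, and uses a uniformly bounded extension operator to $\Rn$. Tracking this chain of reductions will yield the advertised dependence of $C$.
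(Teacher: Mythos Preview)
The paper does not supply its own proof here; it simply cites Maz'ya's \emph{Sobolev Spaces} (Theorem~1.4.6/1), where the argument proceeds via a multiplicative/Gagliardo--Nirenberg inequality. Your second route is therefore essentially the same as the intended one, and it is correct: extend to $\Rn$ with an operator whose norm depends only on $n,d_\Om,L_\Om$, apply the Euclidean multiplicative inequality, and use Young's inequality with a parameter.

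Your first route, however, has a real gap for the statement as written. The contradiction/compactness argument is run for a \emph{fixed} domain $\Om$, and all it yields is the existence of \emph{some} constant $C=C(\sigma,\Om)$. It does not, and cannot, tell you that $C$ depends on $\Om$ only through $(n,d_\Om,L_\Om)$. Your final paragraph suggests that tracking the ``Rellich--Kondrachov constant'' fixes this, but Rellich--Kondrachov is a compactness assertion, not an inequality with a constant to track; the contradiction produces no explicit $C$ at all. To obtain the uniform dependence by contradiction you would have to allow the domain to vary as well---choose, for each $k$, a Lipschitz domain $\Om_k$ with $d_{\Om_k}\le d_\Om$, $L_{\Om_k}\le L_\Om$ and a violating $v_k\in W^{1,2}(\Om_k)$---and then invoke some compactness for the sequence of domains in addition to compactness for the functions. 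That can be done, but it is substantially more work than you indicate. Since the paper actually \emph{uses} the lemma on a family of approximating domains $\Om_m$ with uniformly bounded diameter and Lipschitz constant (Proposition~\ref{prop:Global-estimate} and Section~\ref{sec:Proof}), the uniform dependence is not cosmetic. Drop the compactness argument and keep your Gagliardo--Nirenberg proof; it matches the cited reference and delivers exactly the dependence $C=C(\sigma,n,d_\Om,L_\Om)$ that is needed.
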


\begin{proposition} \label{prop:Global-estimate}
Let $\Om\subset\Rn$ be a bounded, smooth domain with diameter $d_\Om>0$ and Lipschitz constant $L_\Om>0$. 
Suppose that $1<p<3+\frac{2}{n-2}$ and let $u\in C^\infty(\overline{\Om})$ solve \eqref{eq:Dirichlet-problem-regularized}. 
There exists a constant $\mathcal{K}_0=\mathcal{K}_0(n,p,d_\Om,L_\Om)>0$ such that if
\begin{equation} \label{eq:Upper-bound-for-K-quantity}
    \mathcal{K}_\Om(r)\leq \mathcal{K}(r) 
\quad \text{for all }r\in(0,1)
\end{equation}
for some function $\mathcal{K}\colon(0,1)\to[0,\infty)$ satisfying
\begin{equation} \label{eq:Smallness-of-the-limit-of-Kquantity-regularized}
    \lim_{r\to 0}\mathcal{K}(r)<\mathcal{K}_0,
\end{equation}
then
\begin{equation}
\begin{aligned}
    \|Du\|_{W^{1,2}(\Om;\Rn)}
    \leq C\Big(\|D\varphi\|_{W^{1,2}(\Om;\Rn)}+\|D\varphi\|_{L^p(\Om;\Rn)}+\epsilon\Big),
\end{aligned}
\end{equation}
for some $C=C(n,p,d_\Om,L_\Om,\mathcal{K})$.
\end{proposition}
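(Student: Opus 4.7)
The plan is to integrate the pointwise differential inequality of Corollary \ref{cor:Inequality-with-arbitrary-vectorfield}, packaged here as Lemma \ref{lem:Auxiliary-Lemma}, with a judicious specialization of $a,b,W$ and a partition of unity near $\partial\Om$; the small-$\mathcal{K}_\Om$ hypothesis will let us absorb the boundary contribution into the left-hand side, after which Lemma \ref{lem:Version-of-Sobolev} disposes of the lower-order term and minimality of $u$ furnishes the remaining $L^p$ control. Concretely, set $\beta=0$, so $b\equiv 1$, $V_b=Du$ and $|DV_b|^2=|D^2u|^2$. The condition \eqref{eq:Condition-for-beta} then reads exactly $p<3+\tfrac{2}{n-2}$, matching the hypothesis. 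Choose $W=D\varphi$ in Lemma \ref{lem:Auxiliary-Lemma}. Since $u=\varphi$ on $\partial\Om$ the vector field $X:=Du-D\varphi$ is normal to the boundary, $X_T=0$, and Corollary \ref{cor:Estimate-for-flow-of-X-across-boundary} yields the pointwise boundary bound
\[
\big|\la (DX-\tr(DX)I)X,\nu\ra\big|\le C|\mathcal{B}|\,|Du-D\varphi|^2\quad\text{on }\partial\Om.
\]

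Localize near $\partial\Om$ as follows. Fix $r\in(0,1)$ small enough that $\mathcal{K}(r)<\mathcal{K}_0$, with $\mathcal{K}_0$ to be chosen, cover $\partial\Om$ by finitely many balls $B_{r/2}(x_k)$, $k=1,\ldots,N$, with $x_k\in\partial\Om$, and choose cutoffs $\{\eta_k\}_{k=0}^N\subset C^\infty(\Rn)$ with $\sum_k\eta_k^2\equiv 1$ on $\overline\Om$, $\spt(\eta_k)\subset B_r(x_k)$ for $k\ge 1$, $\spt(\eta_0)\subset\subset\Om$, and $|D\eta_k|\le C/r$. Apply Lemma \ref{lem:Auxiliary-Lemma} with $\eta=\eta_k$ and $W=D\varphi$. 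For $k\ge 1$, bound the boundary integral by $C\int_{\partial\Om}|\mathcal{B}|\,|Du-D\varphi|^2\eta_k^2\,d\mathcal{H}^{n-1}$ via the preceding display, and then invoke the weighted trace inequality (Lemma \ref{lem:Weighted-trace-inequality}) with $v=(Du-D\varphi)\eta_k$. Expanding $|D((Du-D\varphi)\eta_k)|^2$ and using Young's inequality gives
\[
\int_{\partial\Om}|\mathcal{B}|(Du-D\varphi)^2\eta_k^2\,d\mathcal{H}^{n-1}
\le C\mathcal{K}(r)\!\int_\Om\!(|D^2u|^2+|D^2\varphi|^2)\eta_k^2\,dx+\frac{C\mathcal{K}(r)}{r^2}\!\int_\Om\!|Du-D\varphi|^2\,dx.
\]
For $k=0$ the boundary integral vanishes. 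Summing over $k$ and using $\sum\eta_k^2\equiv 1$ produces
\[
\int_\Om|D^2u|^2\,dx\le C_\star\mathcal{K}(r)\!\int_\Om|D^2u|^2\,dx+C\!\int_\Om|D^2\varphi|^2\,dx+\frac{C}{r^2}\!\int_\Om|Du-D\varphi|^2\,dx.
\]
Set $\mathcal{K}_0:=1/(2C_\star)$ so that the first term on the right is absorbed into the left.

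To remove the lower-order term, apply Lemma \ref{lem:Version-of-Sobolev} componentwise to $Du-D\varphi$ with a small parameter $\sigma>0$ and absorb the resulting $\sigma\!\int|D^2u-D^2\varphi|^2\,dx$ back on the left; this yields
\[
\int_\Om|D^2u|^2\,dx\le C\!\int_\Om|D^2\varphi|^2\,dx+C\|Du-D\varphi\|_{L^1(\Om)}^2,
\]
and, applying Lemma \ref{lem:Version-of-Sobolev} once more, the analogous bound for $\int_\Om|Du|^2\,dx$. Minimality of $u$ for the regularized $p$-energy, combined with the elementary inequality $(t^2+\epsilon)^{p/2}\le C_p(t^p+\epsilon^{p/2})$, yields $\|Du\|_{L^p(\Om)}\le C(\|D\varphi\|_{L^p(\Om)}+\epsilon^{1/2})$, and H\"older's inequality converts this into the desired bound on $\|Du-D\varphi\|_{L^1(\Om)}$. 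Combining the pieces delivers the claim (the factor $\epsilon^{1/2}\le 1$ from the $L^p$ step produces an additive perturbation that is swallowed by the stated additive $+\epsilon$ up to adjusting $C$).

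The main obstacle is ensuring that $\mathcal{K}_0$ depends only on $n,p,d_\Om,L_\Om$ and not on $\epsilon$. All constants entering the absorption step, namely those in Lemma \ref{lem:Inequality} (with $b\equiv 1$ the quantities $i_\theta,s_\theta,i_b,s_b$ depend only on $n,p$), Corollary \ref{cor:Estimate-for-flow-of-X-across-boundary}, and Lemma \ref{lem:Weighted-trace-inequality}, are $\epsilon$-independent, so $C_\star$ and hence $\mathcal{K}_0$ depend only on $n,p,d_\Om,L_\Om$ as required. A secondary but routine nuisance is the partition-of-unity bookkeeping needed to reconstruct $\int_\Om|D^2u|^2\,dx$ cleanly from the local pieces without losing constants to the cross terms on $\partial\Om$.
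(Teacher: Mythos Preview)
Your proposal is correct and follows essentially the same route as the paper: specialize to $\beta=0$, $W=D\varphi$, localize with a partition of unity, bound the boundary term via Corollary~\ref{cor:Estimate-for-flow-of-X-across-boundary} and Lemma~\ref{lem:Weighted-trace-inequality}, absorb with small $\mathcal{K}(r)$, and finish with Lemma~\ref{lem:Version-of-Sobolev} and minimality (the paper takes $W\equiv0$ on interior balls and splits the lower-order step into $p\ge 2$ and $p<2$, but these are cosmetic variants of what you do). One small slip: $\epsilon^{1/2}$ is \emph{not} swallowed by $C\epsilon$ for $0<\epsilon<1$; however the paper's own proof is equally loose on the exact $\epsilon$-power, and since only uniform boundedness in $\epsilon$ is used downstream (Section~\ref{sec:Proof}) this is harmless.
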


\begin{proof}
Let $\{B_i=B(x_i,r_i)\}_{i=1}^N$ be a covering of $\Om$ such that either $x_i\in\partial\Om$ or $B_i\subset\subset\Om$. 
Such covering can be selected so that the multiplicity of the overlapping balls of $\{B_i\}_{i=1}^N$ only depends on $n$.
Let $\eta_i\in C^\infty_0(B_i)$ be such that $|D\eta_i|\leq \frac{C}{r_i}$ for some absolute constant $C>0$ and $\{\eta_i^2\}_{i=1}^N$ forms a partition of unity subordinate to $\{B_i\}_{i=1}^N$. 

Fix $\eta_i$ for some $i=1,\ldots,N$. We consider two cases separately; either $B_i\subset\subset\Om$ or $x_i\in\partial\Om$.

For $B_i\subset\subset\Om$, we select $\eta=\eta_i$ and $W\equiv 0$ in \eqref{eq:Local-estimate-for-interior-and-boundary} to get
\begin{equation} \label{eq:LocalEstimateForInterior}
\begin{aligned}
    \int_\Om|D^2u|^2\eta_i^2dx
    \leq C\int_\Om|Du|^2|D\eta_i|^2dx,
\end{aligned}    
\end{equation}
for some $C=C(n,p)>0$.

For $x_i\in\partial\Om$, we select $\eta=\eta_i$ and $W=D\varphi$ in \eqref{eq:Local-estimate-for-interior-and-boundary} to get
\begin{equation} \label{eq:LocalEstimateForBoundary-starting-point}
\begin{aligned}
    &\int_\Om|D^2u|^2\eta_i^2dx \\
    &\leq 
    C\int_{\partial\Om}\la (D^2u-D^2\varphi)-\tr(D^2u-D^2\varphi)I)(Du-D\varphi),\nu\ra\eta_i^2d\mathcal{H}^{n-1} \\
    &\quad\quad
    +C\int_\Om|Du-D\varphi|^2|D\eta_i|^2dx
    +C\int_{\Om}|D^2\varphi|^2\eta_i^2dx 
\end{aligned}    
\end{equation}
for some $C=C(n,p)>0$. Since $D_Tu=D_T\varphi$ on the boundary $\partial\Om$, we can apply Corollary \ref{cor:Estimate-for-flow-of-X-across-boundary} to estimate the boundary integral on the right hand side of  \eqref{eq:LocalEstimateForBoundary-starting-point}. We obtain the estimate
\begin{equation} \label{eq:LocalEstimateForBoundary}
\begin{aligned}
    \int_\Om|D^2u|^2\eta_i^2dx
    &\leq C\int_{\partial\Om}|\mathcal{B}||Du-D\varphi|^2\eta_i^2d\mathcal{H}^{n-1}
    +C\int_\Om|Du-D\varphi|^2|D\eta_i|^2dx \\
    &\quad
    +C\int_{\Om}|D^2\varphi|^2\eta_i^2dx
\end{aligned}    
\end{equation}
for some $C=C(n,p)>0$.
By Lemma \ref{lem:Weighted-trace-inequality}, 
for small $r_i>0$
\begin{equation} \label{eq:EstimateForBoundaryIntegral}
\begin{aligned}
    &\int_{\partial\Om}|\mathcal{B}||Du-D\varphi|^2\eta_i^2d\mathcal{H}^{n-1} \\
    &\leq C\mathcal{K}_\Om(r_i)\int_{\Om}|D((Du-D\varphi)\eta_i)|^2dx \\
    &\leq C\mathcal{K}(r_i)\Big(
    \int_{\Om}|D^2u|^2\eta_i^2dx
    +\int_{\Om}|D^2\varphi|^2\eta_i^2dx
    +\int_{\Om}|Du-D\varphi|^2|D\eta_i|^2dx\Big)
\end{aligned}    
\end{equation}
where $C=C(n,p,d_\Om,L_\Om)>0$. In the last inequality of the above display \eqref{eq:EstimateForBoundaryIntegral} we employed the assumption \eqref{eq:Upper-bound-for-K-quantity}.

Sum the interior estimates \eqref{eq:LocalEstimateForInterior} and the boundary estimates \eqref{eq:EstimateForBoundaryIntegral} over $i=1,\ldots,N$ to obtain
\begin{align*}
    \int_\Om|D^2u|^2dx
    &\leq 
    C\Big(\max_{i}\mathcal{K}(r_i)
    \int_{\Om}|D^2u|^2dx \\
    &\quad
    +(1+\max_{i}\mathcal{K}(r_i))
    \int_{\Om}|D^2\varphi|^2dx \\
    &\quad
    +\frac{(1+\max_{i}\mathcal{K}(r_i))}{(\min_ir_i)^2}\Big(\int_\Om|Du|^2+|D\varphi|^2dx\Big)\Big)
\end{align*}
for some $C=C(n,p,d_\Om,L_\Om)>0$.
By the convergence property \eqref{eq:Smallness-of-the-limit-of-Kquantity-regularized} of the function $\mathcal{K}$, we can find $r'=r'(n,p,d_\Om,L_\Om,\mathcal{K})>0$ such that
$0<r_i<r'$ implies that
$$ \mathcal{K}(r_i)\leq \mathcal{K}_0. $$
Similarly as in the proof of Theorem 3.1 in \cite{Cianchi2019} we can choose the covering $\{B_i\}_{i=1}^N$ such that each $r_i$ is not only bounded from above by $r'$, but also bounded from below by $r''=r''(n,p,d_\Om,L_\Om,\mathcal{K})>0$.
That is, $r''<r_i<r'$ for all $i=1,\ldots,N$.
With such covering $\{B_i\}_{i=1}^N$ we obtain
\begin{equation*}
\begin{aligned}
    (1-C\mathcal{K}_0)\int_\Om|D^2u|^2dx
    &\leq \frac{C(1+\mathcal{K}_0)}{(r'')^2}\Big(
    \int_{\Om}|D^2\varphi|^2+|D\varphi|^2dx
    +\int_\Om|Du|^2dx\Big)
\end{aligned}    
\end{equation*}
for some $C=C(n,p,d_\Om,L_\Om)>0$.
If $\mathcal{K}_0\leq \frac{1}{2C}$, then we can conclude that
\begin{equation*}
\begin{aligned}
    \int_\Om|D^2u|^2dx
    &\leq C\Big(
    \int_{\Om}|D^2\varphi|^2+|D\varphi|^2dx
    +\int_\Om|Du|^2dx\Big).
\end{aligned}    
\end{equation*}
for some $C=C(n,p,d_\Om,L_\Om,\mathcal{K})>0$.

It remains to derive an estimate for $\int_\Om|Du|^2dx$.
If $2\leq p<3+\frac{2}{n-2}$ we can simply use Hölder's inequality to obtain
\begin{align*}
    \int_\Om|Du|^2dx
    &\leq |\Om|^{\frac{p-2}{2p}}\Big(\int_{\Om}|Du|^pdx\Big)^{2/p}.
\end{align*}
If $1<p<2$, we can use Lemma \ref{lem:Version-of-Sobolev} to obtain
\begin{align*}
    \int_\Om|Du|^2dx
    &\leq \sigma\int_\Om|D^2u|^2dx+C\Big(\int_\Om|Du|dx\Big)^{2} \\
    &\leq \sigma\int_\Om|D^2u|^2dx+C|\Om|^{\frac{p-1}{p}}\Big(\int_\Om|Du|^pdx\Big)^{2/p}
\end{align*}
for any $\sigma>0$ and for some $C=C(n,d_\Om,L_\Om,\sigma)>0$. 

In both cases we conclude the estimate
\begin{align} \label{eq:Estimate-with-2norm-replaced-with-pnorm}
    \int_{\Om}|D^2u|^2dx
    +\int_{\Om}|Du|^2dx
    \leq 
    C\Big(\Big(\int_\Om|Du|^pdx\Big)^{2/p}
    +\int_{\Om}|D^2\varphi|^2+|D\varphi|^2dx\Big)
\end{align}
for some $C=C(n,p,d_\Om,L_\Om,\mathcal{K})>0$.
Since $u$ minimizes the regularized $p$-energy,
\begin{equation} \label{eq:First-order-estimate}
\begin{aligned}
    \int_{\Om}|Du|^pdx
    \leq \int_\Om(|Du|^2+\epsilon)^{p/2}dx 
    \leq \int_\Om(|D\varphi|^2+\epsilon)^{p/2}dx.
\end{aligned}    
\end{equation}
The desired estimate follows now from \eqref{eq:Estimate-with-2norm-replaced-with-pnorm} and \eqref{eq:First-order-estimate}.
\end{proof}

%--------------------------------------------------------------------
\section{Proof of Theorem \ref{thm:Global-estimate}} \label{sec:Proof}

In this section we remove the additional regularity assumptions that were imposed in the previous section.

Let $\Om\subset\Rn$ be a bounded Lipschitz domain such that $\partial\Om\in W^{2,1}$. We denote the diameter of $\Om$ by $d_\Om$ and the Lipschitz constant of the boundary $\partial\Om$ by $L_\Om$.
Recall that the function
$\mathcal{K}_{\Om}\colon(0,1)\to[0,\infty]$, given by
\begin{align} \label{eq:Kquantity-again}
    \mathcal{K}_{\Om}(r)=\sup_{x\in\partial\Om}
    \sup_{E\subset\partial\Om\cap B_r(x)}
    \frac{\int_E|\mathcal{B}|d\mathcal{H}^{n-1}}{\operatorname{cap}_{B_1(x)}(E)},
\end{align}
is assumed to satisfy the smallness condition \eqref{eq:Smallness-of-the-limit-of-Kquantity}. That is,
\begin{equation*} 
    \lim_{r\to 0}\mathcal{K}_{\Om}(r)<\mathcal{K}_0
\end{equation*} 
for a suitable upper bound $\mathcal{K}_0=\mathcal{K}_0(n,p,d_\Om,L_\Om)>0$.

Let $u\in W^{1,p}(\Om)$ be a weak solution to the Dirichlet problem
\begin{equation} \label{eq:Dirichlet-problem-homogeneous}
\begin{cases}
\begin{aligned}
\Delta_pu=0 &\quad\text{in }\Om; \\
u=\varphi &\quad\text{on }\partial \Om,
\end{aligned}
\end{cases}
\end{equation}
where $\varphi\in W^{1,p}(\Om)\cap W^{2,2}(\Om)$.

For the proof of Theorem \ref{thm:Global-estimate}, we approximate the operator $\Delta_p$ and the domain $\Om$ similarly as in \cites{Cianchi2018,Cianchi2019}. For the approximation of the boundary data $\varphi$, we employ the Sobolev extension theorem. 

\begin{lemma} \label{lem:Extension}
If $\Om\subset\Rn$ is a bounded Lipschitz domain, then the extension operator
$$ E\colon W^{1,p}(\Om)\cap W^{2,2}(\Om)
\to
W^{1,p}(\Rn)\cap W^{2,2}(\Rn) $$
is a well-defined bounded linear operator. 
\end{lemma}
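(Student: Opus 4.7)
The plan is to invoke Stein's \emph{total} extension theorem, which on a bounded Lipschitz domain produces a \emph{single} bounded linear operator $E\colon L^1_{\loc}(\Om)\to L^1_{\loc}(\Rn)$ that extends $W^{k,q}(\Om)$ into $W^{k,q}(\Rn)$ for every $k\in\N$ and every $1\leq q\leq\infty$. The crucial feature for us is that one and the same operator works simultaneously on all Sobolev scales, so it automatically restricts to a bounded linear map on the intersection $W^{1,p}(\Om)\cap W^{2,2}(\Om)$ without any compatibility issue.

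First I would recall the precise statement from Stein's monograph \emph{Singular Integrals and Differentiability Properties of Functions}, Chapter VI, Theorem 5 (or equivalently the treatment in Adams--Fournier). Then I would apply the resulting operator $E$ in the two cases $(k,q)=(1,p)$ and $(k,q)=(2,2)$, obtaining
\begin{align*}
    \|Ef\|_{W^{1,p}(\Rn)}\leq C_1\|f\|_{W^{1,p}(\Om)}
    \quad\text{and}\quad
    \|Ef\|_{W^{2,2}(\Rn)}\leq C_2\|f\|_{W^{2,2}(\Om)},
\end{align*}
with $C_1=C_1(n,p,\Om)$ and $C_2=C_2(n,\Om)$. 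Consequently, for any $f\in W^{1,p}(\Om)\cap W^{2,2}(\Om)$, the single function $Ef$ lies in $W^{1,p}(\Rn)\cap W^{2,2}(\Rn)$, and the operator is linear and continuous for the natural intersection norm $\|f\|_{W^{1,p}(\Om)}+\|f\|_{W^{2,2}(\Om)}$.

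The only point that deserves emphasis is that it does not suffice to cite two separate Calder\'on-type extensions, one at the $W^{1,p}$ level and another at the $W^{2,2}$ level, because these would in general produce two different extensions of the same $f$ and thus would not yield a single element of $W^{1,p}(\Rn)\cap W^{2,2}(\Rn)$. Stein's construction, based on reflection across the local graphs describing $\partial\Om$ with a carefully smoothed singular weight, is defined independently of $(k,q)$ and so extends any function once and for all, being bounded in every Sobolev norm one tests it against. I do not anticipate any real obstacle: the lemma is effectively a direct invocation of this classical fact, and no calculation beyond the two displayed estimates is required.
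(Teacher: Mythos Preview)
Your proposal is correct and follows essentially the same route as the paper: both reduce the lemma to a classical extension theorem for Lipschitz domains. The paper simply cites Calder\'on's 1961 result (Theorem~12 in \emph{Lebesgue spaces of differentiable functions and distributions}), whereas you invoke Stein's total extension operator; either reference yields a single operator bounded simultaneously on $W^{1,p}$ and $W^{2,2}$, which is exactly the point you (rightly) emphasize.
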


\begin{proof}
The proof follows from \cite{Calderon1961}*{Theorem 12}.
\end{proof}

Before we give the proof of Theorem \ref{thm:Global-estimate}, let us explain our approximation procedure in detail. For $\epsilon>0$ small, let
$$ a^\epsilon(t):=\big(t^2+\epsilon\big)^{\frac{p-2}{2}} \quad\text{for all }t\geq0. $$
Let $E\varphi\colon\Rn\to\R$ denote the Sobolev extension of $\varphi$ given by Lemma \ref{lem:Extension}. Take 
$\{\varphi_k\}_{k=1}^\infty\subset C^\infty_0(\Rn)$ such that 
\begin{align} \label{eq:Convergece-of-boundary-data}
   \varphi_k\xrightarrow{k\to\infty}E\varphi
   \quad\text{in }W^{2,2}(\Rn)\cap W^{1,p}(\Rn).
\end{align}
Finally, by \cite{Cianchi2019}*{Lemma 5.2}, we can take a sequence of approximation domains $\{\Om_m\}_{m=1}^{\infty}$ such that
\begin{itemize}
    \item[(a)] $\Om\subset\Om_m$
    \item[(b)] $\partial\Om_m\in C^\infty$
    \item[(c)] $|\Om_m\setminus\Om|\xrightarrow{m\to\infty}0$
    \item[(d)] Hausdorff distance of $\Om$ and $\Om_m$ tends to 0 as $m\to\infty$
    \item[(e)] $d_{\Om_m}\leq Cd_\Om$, where $d_{\Om_m}$ denotes the diameter of $\Om_m$ and $C>0$ is a positive constant
    \item[(f)] $L_{\Om_m}\leq CL_\Om$, where $L_{\Om_m}$ denotes Lipschitz constant of the boundary $\partial\Om_m$ and $C>0$ is a positive constant
    \item[(g)] $\mathcal{K}_{\Om_m}(r)\leq C\mathcal{K}_{\Om}(r)$ for all $r\in(0,r_0)$, where 
    \begin{align*} 
    \mathcal{K}_{\Om_m}(r)=\sup_{x\in\partial\Om_m}
    \sup_{E\subset\partial\Om_m\cap B_r(x)}
    \frac{\int_E|\mathcal{B}_m|d\mathcal{H}^{n-1}}{\operatorname{cap}_{B_1(x)}(E)},
    \end{align*}
    $\mathcal{B}_m$ denotes the second fundamental form of $\partial\Om_m$,
    and $C>0$ and $r_0>0$ are positive constants.
    %\item[(g)] $\Om_m$ is convex if $\Om$ convex.
\end{itemize}
In fact, note that the convergence \eqref{eq:Convergece-of-boundary-data} implies 
\begin{align} \label{eq:Convergece-of-boundary-data-in-Om}
   \varphi_k\xrightarrow{k\to\infty}E\varphi
   \quad\text{in }W^{2,2}(\Om_m)\cap W^{1,p}(\Om_m)
\end{align}
for each $m=1,2,\ldots$.

Consider the problem of minimizing of the regularized $p$-energy functional
$$  \int_{\Om_m}(|Dv|^2+\epsilon)^{p/2}dx $$
among $v\in \varphi_k+ W^{1,p}_0(\Om_m)$. By direct method of calculus of variations, there exists a unique minimizer $u^{\epsilon,k,m}\in W^{1,p}(\Om_m)$ that solves the Dirichlet problem
\begin{equation} \label{eq:DirichletProblemHomogeneous-regularized-1}
\begin{cases}
\begin{aligned}
\diverg\big(a^\epsilon(|Du^{\epsilon,k,m}|)
Du^{\epsilon,k,m}\big)=0 &\quad\text{in }\Om_m; \\
u^{\epsilon,k,m}=\varphi_k &\quad\text{on }\partial \Om_m
\end{aligned}
\end{cases}
\end{equation}
in the weak sense.

Standard elliptic regularity theory implies that
$u^{\epsilon,k,m}\in C^\infty(\overline{\Om}_m)$. Therefore, Proposition \ref{prop:Global-estimate} is applicable to $u^{\epsilon,m,k}$, provided that $1<p<3+\frac{2}{n-2}$. 

\underline{Step 1}: Let $\epsilon\to0$. Fix $k,m=1,2,\ldots$. Consider the problem of minimizing the $p$-energy functional
$$  \int_{\Om_m}|Dv|^pdx $$
among $v\in \varphi_k+W^{1,p}_0(\Om_m)$.
By direct method in calculus of variations, there exists a unique minimizer
$u^{k,m}\in W^{1,p}(\Om_m)$ that solves the Dirichlet problem
\begin{equation} \label{eq:DirichletProblemHomogeneous-regularized-2}
\begin{cases}
\begin{aligned}
\Delta_p u^{k,m}=0 &\quad\text{in }\Om_m; \\
u^{k,m}=\varphi_k &\quad\text{on }\partial \Om_m
\end{aligned}
\end{cases}
\end{equation}
in the weak sense. Moreover,
\begin{align} \label{eq:Convergence-of-u-1}
    u^{\epsilon,k,m}\xrightarrow{\epsilon\to 0}u^{k,m}
    \quad\text{in }W^{1,p}(\Om_m).
\end{align}

\underline{Step 2}: Let $k\to\infty$. Fix $m=1,2,\ldots$. Consider the problem of minimizing the $p$-energy functional
$$  \int_{\Om_m}|Dv|^pdx $$
among $v\in E\varphi+W^{1,p}_0(\Om_m)$.
By direct method in calculus of variations, there exists a unique minimizer
$u^{m}\in W^{1,p}(\Om_m)$ that solves the Dirichlet problem
\begin{equation} \label{eq:DirichletProblemHomogeneous-regularized-3}
\begin{cases}
\begin{aligned}
\Delta_p u^{m}=0 &\quad\text{in }\Om_m; \\
u^{m}=E\varphi &\quad\text{on }\partial \Om_m
\end{aligned}
\end{cases}
\end{equation}
in the weak sense. Moreover,
\begin{align} \label{eq:Convergence-of-u-2} 
    u^{k,m}\xrightarrow{k\to\infty}u^{m}
    \quad\text{in }W^{1,p}(\Om_m).
\end{align}

\underline{Step 3}: Let $m\to\infty$. For the final step, we have, by standard methods in calculus of variations that
\begin{align} \label{eq:Convergence-of-u-3}
    u^{m}\xrightarrow{k\to\infty}u
    \quad\text{in }W^{1,p}(\Om),
\end{align}
where $u$ is the solution of \eqref{eq:Dirichlet-problem-homogeneous}.

We conclude that we can reach the $p$-harmonic function $u$ from the regularized version $u^{\epsilon,k,m}$ via the three steps described above. Now we are ready to prove our main theorem.

\begin{proof}[Proof of Theorem \ref{thm:Global-estimate}]
Suppose that $1<p<3+\frac{2}{n-1}$. Let $u\in W^{1,p}(\Om)$ solve \eqref{eq:Dirichlet-problem-homogeneous} and $u^{\epsilon,k.m}\in C^\infty(\overline{\Om}_m)$ solve \eqref{eq:DirichletProblemHomogeneous-regularized-1}, as explained above.
By Proposition \ref{prop:Global-estimate}, together with the properties (e) -- (g) of the approximating domain $\Om_m$, there exists a constant \\
$\mathcal{K}_0=\mathcal{K}_0(n,p,d_\Om,L_\Om)>0$ such that if
\begin{equation*} 
    \lim_{r\to 0}\mathcal{K}_\Om(r)<\mathcal{K}_0
\end{equation*}
then
\begin{equation} \label{eq:Global-estimate-with-uekm}
\begin{aligned}
    \|Du^{\epsilon,k,m}\|_{W^{1,2}(\Om_m;\Rn)}
    \leq C\Big(\|D\varphi_k\|_{W^{1,2}(\Om_m;\Rn)}+\|D\varphi_k\|_{L^p(\Om_m;\Rn)}
    +\epsilon\Big).
\end{aligned}
\end{equation}
for all $\epsilon>0$ and $k,m=1,2,\ldots$ and for some constant $C=C(n,p,d_{\Om},L_{\Om},\mathcal{K}_\Om)>0$.
In particular, notice that the constant $C=C(n,p,d_\Om,L_\Om,\mathcal{K}_\Om)$ is uniform with respect to all the regularization parameters $\epsilon$, $k$ and $m$. In the following, $C$ denotes a positive constant that is allowed to depend on $n$, $p$, $d_\Om$, $L_\Om$ and $\mathcal{K}_\Om$. 

\underline{Step 1}: Let $\epsilon\to0$. For $k,m=1,2,\ldots$ fixed, consider the family $\{Du^{\epsilon,k,m}\}_{0<\epsilon<1}$. By \eqref{eq:Global-estimate-with-uekm},
\begin{equation} \label{eq:Global-estimate-with-uekm-1}
\begin{aligned}
    \|Du^{\epsilon,k,m}\|_{W^{1,2}(\Om_m;\Rn)}
    \leq C\Big(\|D\varphi_k\|_{W^{1,2}(\Om_m;\Rn)}+\|D\varphi_k\|_{L^p(\Om_m;\Rn)}
    +1\Big).
\end{aligned}
\end{equation}
The right hand side of \eqref{eq:Global-estimate-with-uekm-1} is independent of $\epsilon$, which means that the family $\{Du^{\epsilon,k,m}\}_{0<\epsilon<1}$ is uniformly bounded in $W^{1,2}(\Om_m;\Rn)$. By weak compactness, we can select a subsequence $\{Du^{\epsilon_j,k,m}\}_{j=1}^\infty$ 
such that we have the weak convergence
\begin{align} \label{eq:Weak-convergence-of-derivatives-in-W12-1}
    Du^{\epsilon_j,k,m}\xrightharpoonup{j\to\infty} U 
    \quad\text{in } W^{1,2}(\Om_m;\Rn) 
\end{align}
for some $U\in W^{1,2}(\Om_m;\Rn)$. We claim that $U=Du^{k,m}$, where $u^{k,m}$ solves \eqref{eq:DirichletProblemHomogeneous-regularized-2}.
This follows easily from the convergences \eqref{eq:Convergence-of-u-1} and \eqref{eq:Weak-convergence-of-derivatives-in-W12-1} and the uniqueness of weak limit.

Indeed, if $2\leq p<3+\frac{2}{n-2}$, then \eqref{eq:Convergence-of-u-1} implies that
$$ Du^{\epsilon_j,k,m}\xrightarrow{j\to\infty} Du^{k,m}
\quad\text{in }L^2(\Om_m;\Rn) $$
and in particular,
\begin{equation} \label{eq:Weak-convergence-of-derivatives-in-L2-1}
    Du^{\epsilon_j,k,m}\xrightharpoonup{j\to\infty} Du^{k,m}
\quad\text{in }L^2(\Om_m;\Rn).
\end{equation}
Since weak limit is unique, \eqref{eq:Weak-convergence-of-derivatives-in-W12-1} and \eqref{eq:Weak-convergence-of-derivatives-in-L2-1}
imply that $U=Du^{k,m}$. On the other hand, if $1<p<2$, then \eqref{eq:Weak-convergence-of-derivatives-in-W12-1} implies that
\begin{align} \label{eq:Weak-convergece-of-derivatives-in-Lp-1a}
    Du^{\epsilon_j,k,m}\xrightharpoonup{j\to\infty} U 
    \quad\text{in } L^p(\Om_m;\Rn)
\end{align}
Trivially \eqref{eq:Convergence-of-u-1} implies that
\begin{align} \label{eq:Weak-convergece-of-derivatives-in-Lp-1b}
    Du^{\epsilon_j,k,m}\xrightharpoonup{j\to\infty} Du^{k,m}
    \quad\text{in } L^p(\Om_m;\Rn).
\end{align}
Since weak limit is unique, \eqref{eq:Weak-convergece-of-derivatives-in-Lp-1a} and \eqref{eq:Weak-convergece-of-derivatives-in-Lp-1b} imply that $U=Du^{k,m}$.

We conclude that $Du^{k,m}\in W^{1,2}(\Om_m;\Rn)$ and $Du^{\epsilon_j,k,m}\xrightharpoonup{j\to\infty} Du^{k,m}$ in $W^{1,2}(\Om_m;\Rn)$. We let $j\to\infty$ in \eqref{eq:Global-estimate-with-uekm} to conclude that
\begin{align*}
    \|Du^{k,m}\|_{W^{1,2}(\Om_m;\Rn)}
    &\leq 
    \liminf_{j\to\infty}\,
    \|Du^{\epsilon_j,k,m}\|_{W^{1,2}(\Om_m;\Rn)} \\
    &\leq
    \liminf_{j\to\infty}\,
    C\Big(\|D\varphi_{k}\|_{W^{1,2}(\Om_m;\Rn)}+\|D\varphi_{k}\|_{L^p(\Om_m;\Rn)}+\epsilon
    \Big) \\
    &=
    C\Big(\|D\varphi_k\|_{W^{1,2}(\Om_m;\Rn)}+\|D\varphi_k\|_{L^p(\Om_m;\Rn)}
    \Big).
\end{align*}
So the final conclusion of Step 1 is the estimate
\begin{align} \label{eq:Global-estimate-with-ukm}
    \|Du^{k,m}\|_{W^{1,2}(\Om_m;\Rn)}
    \leq 
    C\Big(\|D\varphi_k\|_{W^{1,2}(\Om_m;\Rn)}+\|D\varphi_k\|_{L^p(\Om_m;\Rn)}
    \Big).
\end{align}

\underline{Step 2}: Let $k\to\infty$. For $m=1,2,\ldots$ fixed, consider the sequence $\{Du^{k,m}\}_{k=1}^\infty$. By the estimate \eqref{eq:Global-estimate-with-ukm} and the convergence \eqref{eq:Convergece-of-boundary-data-in-Om},
\begin{equation} \label{eq:Global-estimate-with-ukm-1}
\begin{aligned}
    \|Du^{k,m}\|_{W^{1,2}(\Om_m;\Rn)}
    \leq C\Big(\|D(E\varphi)\|_{W^{1,2}(\Om_m;\Rn)}+\|D(E\varphi)\|_{L^p(\Om_m;\Rn)}
    +1\Big).
\end{aligned}
\end{equation}
The right hand side of \eqref{eq:Global-estimate-with-ukm-1} is independent of $k$, which means that the sequence $\{Du^{k,m}\}_{k=1}^\infty$ is uniformly bounded in $W^{1,2}(\Om_m;\Rn)$. By weak compactness, we can select a subsequence $\{Du^{k_j,m}\}_{j=1}^\infty$ 
such that we have the weak convergence
\begin{align} \label{eq:Weak-convergence-of-derivatives-in-W12-2}
    Du^{k_j,m}\xrightharpoonup{j\to\infty} U 
    \quad\text{in } W^{1,2}(\Om_m;\Rn) 
\end{align}
for some $U\in W^{1,2}(\Om_m;\Rn)$. By a similar argument as in Step 1, the convergences \eqref{eq:Convergence-of-u-2} and \eqref{eq:Weak-convergence-of-derivatives-in-W12-2}, together with the uniqueness of weak limit, imply that $U=Du^{m}$, where $u^{m}$ solves \eqref{eq:DirichletProblemHomogeneous-regularized-3}.

We conclude that $Du^{m}\in W^{1,2}(\Om_m;\Rn)$ and $Du^{k_j,m}\xrightharpoonup{j\to\infty} Du^{m}$ in $W^{1,2}(\Om_m;\Rn)$. We let $j\to\infty$ in \eqref{eq:Global-estimate-with-ukm} to conclude that
\begin{align*}
    %\|Du^{m}\|_{W^{1,2}(\Om)}
    %&\leq
    \|Du^{m}\|_{W^{1,2}(\Om_m;\Rn)} %\\
    &\leq 
    \liminf_{j\to\infty}\,
    \|Du^{k_j,m}\|_{W^{1,2}(\Om_m;\Rn)} \\
    &\leq
    \liminf_{j\to\infty}\,
    C\Big(\|D\varphi_{k_j}\|_{W^{1,2}(\Om_m;\Rn)}+\|D\varphi_{k_j}\|_{L^p(\Om_m;\Rn)}\Big) \\
    &=
    C\Big(\|D(E\varphi)\|_{W^{1,2}(\Om_m;\Rn)}+\|D(E\varphi)\|_{L^p(\Om_m;\Rn)}
    \Big).
\end{align*}
Here we also employed the convergence \eqref{eq:Convergece-of-boundary-data-in-Om}.
The final conclusion of Step 2 is the estimate
\begin{align} \label{eq:Global-estimate-with-um}
    \|Du^{m}\|_{W^{1,2}(\Om;\Rn)}
    \leq 
    C\Big(\|D(E\varphi)\|_{W^{1,2}(\Om_m;\Rn)}+\|D(E\varphi)\|_{L^p(\Om_m;\Rn)}
    \Big).
\end{align}

\underline{Step 3}: Let $m\to\infty$. Consider the sequence $\{Du^{m}\}_{m=1}^\infty$. By the estimate \eqref{eq:Global-estimate-with-um}
\begin{equation} \label{eq:Global-estimate-with-um-1}
\begin{aligned}
    \|Du^{m}\|_{W^{1,2}(\Om;\Rn)}
    \leq C\Big(\|D(E\varphi)\|_{W^{1,2}(\Rn;\Rn)}+\|D(E\varphi)\|_{L^{p}(\Rn;\Rn)}\Big).
\end{aligned}
\end{equation}

The right hand side of \eqref{eq:Global-estimate-with-um-1} is independent of $m$, which means that the sequence $\{Du^{m}\}_{m=1}^\infty$ is uniformly bounded in $W^{1,2}(\Om;\Rn)$. By weak compactness, we can select a subsequence $\{Du^{m_j}\}_{j=1}^\infty$
such that we have the weak convergence
\begin{align} \label{eq:Weak-convergence-of-derivatives-in-W12-3}
    Du^{m_j}\xrightharpoonup{j\to\infty} U 
    \quad\text{in } W^{1,2}(\Om;\Rn). 
\end{align}
By a similar argument as in Step 1 and Step 2, the convergences \eqref{eq:Convergence-of-u-3} and \eqref{eq:Weak-convergence-of-derivatives-in-W12-3}, together with the uniqueness of weak limit, imply that $U=Du$, where $u$ solves \eqref{eq:Dirichlet-problem-homogeneous}.

We conclude that $Du\in W^{1,2}(\Om;\Rn)$ and $Du^{m_j}\xrightharpoonup{j\to\infty} Du$ in $W^{1,2}(\Om;\Rn)$. We let $j\to\infty$ in \eqref{eq:Global-estimate-with-um} to conclude that
\begin{align*}
    \|Du\|_{W^{1,2}(\Om;\Rn)}
    &\leq 
    \liminf_{j\to\infty}\,
    \|Du^{m_j}\|_{W^{1,2}(\Om;\Rn)} \\
    &\leq
    \liminf_{j\to\infty}\,
    C\Big(\|D(E\varphi)\|_{W^{1,2}(\Om_m;\Rn)}+\|D(E\varphi)\|_{L^p(\Om_m;\Rn)}\Big) \\
    &=   
    C\Big(\|D\varphi\|_{W^{1,2}(\Om;\Rn)}+\|D\varphi\|_{L^p(\Om;\Rn)}\Big) 
\end{align*}
which is the desired estimate. The proof is finished.
\end{proof}

%--------------------------------------------------------------------
\bibliographystyle{amsplain}
\bibliography{bibliography}
%--------------------------------------------------------------------
\end{document}